\begin{document}

{
\def\a{\alpha}
\def\b{\beta}
\def\c{\gamma}
\def\d{\delta}
\def\e{\varepsilon}
\def\A{\mathcal A}
\def\B{\mathcal B}
\def\N{\mathbb N}
\def\R{\mathbb R}

\title{Size-minimal combinatorial designs of staircase type} 
\author{Barbora Bat\'\i kov\'a, Tom\'a\v s J.\ Kepka and Petr C.\ N\v emec}

\address{Barbora Bat\'\i kov\'a, Department of Mathematics, CULS,
Kam\'yck\'a 129, 165 21 Praha 6 - Suchdol, Czech Republic}
\email{batikova@tf.czu.cz}

\address{Tom\'a\v s J.\ Kepka, Faculty of Education, Charles University, M.\ Rettigov\'e 4, 116 39 Praha 1, Czech Republic}
\email{tomas.kepka@karlin.mff.cuni.cz}

\address{Petr C.\ N\v emec, Department of Mathematics, CULS,
Kam\'yck\'a 129, 165 21 Praha 6 - Suchdol, Czech Republic}
\email{nemec@tf.czu.cz}

\subjclass{05B05}

\keywords{design, staircase type, minimum size}

\begin{abstract}
The minimum size of combinatorial designs of staircase type is found.
\end{abstract}

\maketitle

\newtheorem{lemma}{Lemma}[section]
\newtheorem{rem}[lemma]{Remark}
\newtheorem{theorem}[lemma]{Theorem}
\newtheorem{prop}[lemma]{Proposition}
\newtheorem{obser}[lemma]{Observation}
\newtheorem{ex}[lemma]{Example}

\section{Introduction}

In \cite{B}, an interesting sort of combinatorial designs is constructed which may be called designs of staircase type. Given a positive integer $n$ and a partitioning $n=r_1s_1+\dots+ r_ts_t$, $t,r_i,s_i$ positive integers, such that $r_1>\dots>r_t$ (for $t\ge 2$), we can write $n$ symbols $1,\dots,n$ in the form of a staircase matrix having $r_1$ rows where first $r_1-r_2$ rows have $x_1$ columns, next $r_2-r_3$ rows have $t_1+t_2$ columns, etc., and finally last $r_t$ rows have $t_1+\dots+t_k$ columns. Then we can construct a~design having $r_1+s_1+\dots+s_t$ sets by taking all $r_1$ rows and $s_1+\dots+s_t$ columns of this staircase matrix. Such designs have exactly two replications of each symbol and various cardinalities for the sets constituting the design. Considering designs of this type having the minimum size, the following question jumps out: Given a positive integer $n$, find partitionings $n=r_1s_1+\dots+ r_ts_t$, $t,r_i,s_i$ positive integers, such that $r_1>\dots>r_t$ (for $t\ge 2$) and the sum $r_1+s_1+\dots+s_t$ is the smallest possible. The aim of this note is to find this smallest sum. In fact, the staircase designs are useful in non-adaptive combinatorial group testing (see, e.g., \cite{D}) and this, in turn, has to be accessible to general public use. Not, solely, to a fistful of chosen ones in the know. Henceforth, as a rule, we are making use of ultraelementary methods only.

\section{Preliminaries}

In the whole article, $\mathbb Z$ ($\N_0$, $\N$, $\mathbb Q$, $\mathbb R$, resp.) stand for the domain (halfdomain, halfdomain, field, field, resp.) of integers (non-negative integers, positive integers, rationals, real numbers, resp.) This notation is (with a pinch of salt) standard, while the following one not and it is introduced for local purposes solely.

Let $n\in\N_0$. We denote $\A(n)=\{\,(r,s)\,|\,r,s\in\N_0,r\le s,r+s=n\,\}$ and $\B(n)=\{\,(r,s)\,|\,r,s\in\N_0,r\le s,r\cdot s=n\,\}$ It is immediately visible that $\A(0)=\{(0,0)\}$, $\A(1)=\{(0,1)\}$, $\A(2)=\{(0,2),(1,1)\}$, $\B(0)=\{(0,s)\,|\,s\in\N_0\}$, $\B(1)=\{(1,1)\}$, $\B(2)=\{(1,2)\}$ and $\B(n)\subseteq\N\times\N$ for $n\in\N$. Besides, $\A(0)\cap\B(0)=\{(0,0)\}$, $\A(4)\cap\B(4)=\{(2,2)\}$ and $\A(n)\cap\B(n)=\emptyset$ for $n\ne0,4$.

For $n\ge2$, put $\a(n)=\max\{rs\,|\,(r,s)\in\A(n)\}$ and $\b(n)=\min\{rs\,|\,(r,s)\in\A(n)\}$.

For $n\ge1$, put $\c(n)=\max\,\{\,r+s\,|\,(r,s)\in\B(n)\,\}$ and $\d(n)=\min\,\{\,r+s\,|\,(r,s)\in\B(n)\,\}$.

\begin{prop} {\rm(i)} $\a(n)=\frac{n^2}4\, (=\left(\frac n2\right)^2)$ for every $n\ge 2$, $n$ even.\newline
{\rm(ii)} $\a(n)=\frac{n^2-1}4$ for every $n\ge 3$, $n$ odd.\end{prop}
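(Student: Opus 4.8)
The plan is to turn the optimization over $\A(n)$ into a one-variable problem by using the elementary identity $4rs=(r+s)^2-(r-s)^2$. For $(r,s)\in\A(n)$ we have $r+s=n$, hence $4rs=n^2-(s-r)^2$; since $r\le s$, the quantity $s-r$ is a nonnegative integer, so maximizing $rs$ over $\A(n)$ is the same as minimizing $(s-r)^2$, i.e.\ minimizing $s-r$, over all admissible pairs. The only arithmetic input needed is a parity observation: because $(r+s)-(s-r)=2r$, the integers $n=r+s$ and $s-r$ have the same parity.

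For part (i), with $n\ge2$ even, $s-r$ ranges over nonnegative \emph{even} integers, and the value $s-r=0$ is realized by the pair $(n/2,n/2)$, which indeed belongs to $\A(n)$ (it satisfies $r\le s$ and $n/2\in\N_0$). Hence $\min(s-r)^2=0$ and $\a(n)=\tfrac14 n^2$, the parenthetical reformulation $\tfrac14 n^2=(n/2)^2$ being immediate. For part (ii), with $n\ge3$ odd, $s-r$ ranges over nonnegative \emph{odd} integers, so $s-r\ge1$ and $(s-r)^2\ge1$, while the value $s-r=1$ is realized by $\bigl(\tfrac{n-1}2,\tfrac{n+1}2\bigr)$, which lies in $\A(n)$ for $n\ge1$. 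Hence $\min(s-r)^2=1$ and $\a(n)=\tfrac14(n^2-1)$.

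I do not expect any genuine obstacle: the argument is a completion of the square together with a parity check. The points requiring a line of care are purely bookkeeping, namely verifying that the proposed extremal pairs actually satisfy the constraints $r\le s$ and $r,s\in\N_0$, and that $\A(n)$ is nonempty in each range so that the maximum is attained. As an alternative one could argue by induction, using the bijection-like map $(r,s)\mapsto(r+1,s-1)$ from $\A(n-2)$ into $\A(n)$ and tracking how $rs$ changes, but the identity-based proof above is shorter and completely self-contained, in the ultraelementary spirit announced in the introduction.
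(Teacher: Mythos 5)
Your proof is correct and rests on the same completion-of-the-square identity as the paper's: writing $s=n-r$, the paper's computation $4r(n-r)=n^2-(n-2r)^2$ is exactly your $4rs=n^2-(s-r)^2$, and both arguments then exhibit the extremal pairs $(n/2,n/2)$ and $((n-1)/2,(n+1)/2)$. Your parity reformulation is a slightly tidier packaging of the paper's case analysis (which instead bounds $(n-2r)^2$ from below by $0$, resp.\ by $4$ when $2r<n-1$), but it is not a genuinely different route.
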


\begin{proof} We have $\a(2)=1$, $\a(3)=2$, so that we assume $n\ge4$. If $1\le r<2r\le n-1$ then $(r,n-r)\in\A(n)$, $0<(n-2r)^2=n^2-4rn+4r^2$, $4r(n-r)=4rn-4r^2<n^2$, $r(n-r)<(\frac n2)^2$. If $n$ is odd and $2r<n-1$ then $r\le\frac{n-3}2$, $2+2r\le2+n-3=n-1<n$, $2<n-2r$, $4<(n-2r)^2=n^2-4rn+4r^2$, $4r(n-r)=4rn-4r^2<n^2-4<n^2-1$, $r(n-r)<\frac{n^2-1}4$. On the other hand, $\left(\frac n2,\frac n2\right)\in\A(n)$ for $n$ even and $\left(\frac{n-1}2,\frac{n-1}2\right)\in\A(n)$ for $n$ odd. The rest is clear.\end{proof}

\begin{prop} $\b(n)=n-1$ for every $n\ge2$.\end{prop}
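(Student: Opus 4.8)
The plan is to pin down $\b(n)$ by exhibiting an explicit minimizing pair and then checking that no admissible pair does better. First I would record that for $n\ge2$ the pair $(1,n-1)$ belongs to $\A(n)$: indeed $1\le n-1$ and $1+(n-1)=n$. Its product is $n-1$, which already gives $\b(n)\le n-1$.

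For the reverse inequality I would take an arbitrary $(r,s)\in\A(n)$ with $r\ge1$ (the convention, matching the earlier remark that $\B(n)\subseteq\N\times\N$, being that the minimum in the definition of $\b$ is over pairs with positive coordinates) and compute $rs-(n-1)=rs-(r+s)+1=(r-1)(s-1)$. Since $r\ge1$ and $s=n-r\ge r\ge1$, both factors are non-negative, so $rs\ge n-1$. Taking the minimum over all such pairs yields $\b(n)\ge n-1$, and combining the two bounds finishes the proof.

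An alternative, if one prefers to avoid the factoring trick, is to argue by monotonicity: on the integer interval $1\le r\le\lfloor n/2\rfloor$ the map $r\mapsto r(n-r)$ is strictly increasing, since its continuous extension is a downward parabola with vertex at $r=n/2$; hence the minimum is attained at the left endpoint $r=1$, again with value $n-1$.

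I do not expect a genuine obstacle here: the statement is ultraelementary. The only points requiring a moment's care are the degenerate boundary, namely keeping in mind that $s=n-r\ge r$ is exactly what makes the factor $s-1$ non-negative, and the small cases $n=2,3$, which the same identity $(r-1)(s-1)=rs-(n-1)$ handles directly (for $n=2$, equality in $rs\ge n-1$ forces $r=s=1$).
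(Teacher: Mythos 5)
Your proof is correct and follows essentially the same route as the paper: exhibit $(1,n-1)\in\A(n)$ for the upper bound, then bound $rs$ from below by elementary algebra. Your identity $rs-(n-1)=(r-1)(s-1)\ge0$ is a slightly cleaner, uniform version of the paper's chain $r^2-1=(r-1)(r+1)<(r-1)n$ for $r\ge2$ (which additionally yields the strict bound $rs\ge n$ recorded in the subsequent remark), and your explicit note that the minimum is taken over pairs with $r\ge1$ correctly addresses the degenerate pair $(0,n)$ that the paper's definition of $\A(n)$ technically admits.
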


\begin{proof} We have $\b(2)=1$, $\b(3)=2$, so that we assume $n\ge4$. Now, $(1,n-1)\in\A(n)$ and, if $(r,s)\in\A(n)$ $2\le r$, then $3\le r+1\le n-1<n$, $r^2-1=(r-1)(r+1)<(r-1)n=rn-n$, $n-1<rn-r^2=r(n-r)=rs$, $n\le rs$.\end{proof}

\begin{prop} $\c(n)=n+1$ for every $n\ge1$.\end{prop}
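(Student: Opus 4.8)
The plan is to bound $\c(n)$ both from below and from above, the lower bound coming from an explicit element of $\B(n)$ and the upper bound from a one-line factorization identity.

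First I would note that for $n\ge1$ any $(r,s)\in\B(n)$ has $r\ge1$: indeed $r=0$ would force $rs=0\ne n$. This guarantees $\c(n)$ is taken over a nonempty set (it contains $(1,n)$), so the definition makes sense, and it already yields $\c(n)\ge1+n=n+1$.

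For the reverse inequality, take an arbitrary $(r,s)\in\B(n)$, so $r,s\ge1$ and $rs=n$. The key step is the identity
\[
(n+1)-(r+s)=rs+1-r-s=(r-1)(s-1).
\]
Since $r-1\ge0$ and $s-1\ge0$, the right-hand side is nonnegative, hence $r+s\le n+1$. Taking the maximum over all $(r,s)\in\B(n)$ gives $\c(n)\le n+1$, and combined with the lower bound we conclude $\c(n)=n+1$.

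There is essentially no obstacle here: everything reduces to spotting the factorization $rs-r-s+1=(r-1)(s-1)$, after which both bounds are immediate. If one wishes to mirror the style of the preceding propositions, one can instead dispose of the small cases $n=1,2$ by hand ($\c(1)=2$, $\c(2)=3$) and run the displayed computation only for $n\ge3$, but this is not strictly necessary since the argument above is uniform in $n\ge1$.
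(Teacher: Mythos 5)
Your proof is correct and follows essentially the same route as the paper: the lower bound from $(1,n)\in\B(n)$ and an algebraic verification that $r+s\le n+1$ for every $(r,s)\in\B(n)$. Your factorization $(n+1)-(r+s)=(r-1)(s-1)\ge0$ is a slightly cleaner way to get the upper bound, since it handles $r=1$ and $r\ge2$ uniformly, whereas the paper separates off the case $r\ge2$ and divides through by $r$; the content is the same.
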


\begin{proof} Since $(1,n)\in\B(n)$, we have $n+1\le\c(n)$. If $(r,s)\in\B(n)4$, $2\le r$, then $r(r-1)<n(r-1)$, $r^2+n<rn+r$, $r+s=r+\frac nr=\frac{r^2+n}r<\frac{rn+r}r=n+1$.\end{proof}

\begin{rem} \rm If $(r,s)\in\A(n)$, $2\le r$, then $n\le rs$. If $(r,s)\in\B(n)$, $2\le r$, then $r+s\le n$.\end{rem}

\section{First bunch of observations}

\begin{lemma} Let $r_1,r_2\in\R$, $r_1>0$, $r_2\ge0$. Then:\newline
{\rm(i)} $r_1+\frac{r_2}{r_1}\ge2r_2^{\frac12}$.\newline
{\rm(ii)} $r_1+\frac{r_2}{r_1}=2r_2^{\frac12}$ if and only if $r_1=\frac{r_2}{r_1}$ (i.e., $r_2=r_1^2$).\end{lemma}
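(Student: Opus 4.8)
The statement to prove is the AM--GM inequality in the two-term form: for $r_1>0$ and $r_2\ge 0$, $r_1+\frac{r_2}{r_1}\ge 2r_2^{1/2}$, with equality iff $r_1^2=r_2$.

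The plan is to reduce everything to the observation that a real square is nonnegative. Consider the quantity $r_1+\frac{r_2}{r_1}-2r_2^{1/2}$. Multiplying through by the positive number $r_1$, it suffices to analyze $r_1^2 - 2r_1 r_2^{1/2} + r_2$, which is visibly the perfect square $\bigl(r_1 - r_2^{1/2}\bigr)^2$. Since $r_2\ge 0$, the quantity $r_2^{1/2}$ is a well-defined nonnegative real, so this square makes sense and is $\ge 0$; dividing back by $r_1>0$ gives part (i).

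For part (ii), I would note that $\bigl(r_1-r_2^{1/2}\bigr)^2/r_1 = 0$ if and only if $\bigl(r_1-r_2^{1/2}\bigr)^2=0$, i.e.\ $r_1 = r_2^{1/2}$, which (squaring, and using $r_1>0$) is equivalent to $r_2 = r_1^2$; and this in turn says exactly $\frac{r_2}{r_1}=r_1$. So the equality case is immediate once the algebraic identity is in hand.

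There is essentially no obstacle here: the only thing to be slightly careful about is that $r_2^{1/2}$ is only introduced as the nonnegative square root (fine, since $r_2\ge 0$), and that the division by $r_1$ is legitimate (fine, since $r_1>0$). In keeping with the paper's stated preference for ultraelementary methods, I would present it as: $r_1\bigl(r_1+\tfrac{r_2}{r_1}-2r_2^{1/2}\bigr) = r_1^2 - 2r_1 r_2^{1/2} + r_2 = (r_1 - r_2^{1/2})^2 \ge 0$, then divide by $r_1$ for (i), and read off (ii) from when the square vanishes.
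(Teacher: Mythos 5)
Your proof is correct and is essentially the paper's argument: both reduce the inequality to the nonnegativity of a perfect square, the paper via $\left(r_1+\frac{r_2}{r_1}\right)^2=\left(r_1-\frac{r_2}{r_1}\right)^2+4r_2\ge4r_2$ and you via $r_1^2-2r_1r_2^{1/2}+r_2=(r_1-r_2^{1/2})^2\ge0$. Your variant has the very minor advantage of not needing to take a square root of both sides at the end, but the two are the same idea.
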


\begin{proof} $\left(r_1+\frac{r_2}{r_1}\right)^2=r_1^2+2r_2+\frac{r_2^2}{r_1^2}=\left(r_1-\frac{r_2}{r_1}\right)^2+4r_2\ge4r_2$.\end{proof}

\begin{lemma} Let $r_1,r_2\in\N$, $r_1|n$, $r_2|n$.\newline
{\rm(i)} If $r_1+\frac n{r_1}=r_2+\frac n{r_2}$ then either $r_1=r_2$ or $r_1=\frac n{r_2}$, $r_2=\frac n{r_1}$ and $n=r_1r_2$.\newline
{\rm(ii)} If $r_1<r_2$ and $r_2^2\le n$ then $r_2+\frac n{r_2}<r_1+\frac n{r_1}$.\newline
{\rm(iii)} If $r_1^2>n$ then $\left(\frac n{r_1}\right)^2<n$.\end{lemma}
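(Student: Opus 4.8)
The plan is to dispatch all three parts by clearing denominators and factoring; the divisor hypotheses $r_1\mid n$ and $r_2\mid n$ serve only to make $\frac n{r_1}$ and $\frac n{r_2}$ integers, while the arithmetic itself goes through over $\R$. (I take $n\ge1$, as is implicit in the paper; for $n=0$ assertion (iii) is actually false.) For (i), I would multiply the hypothesis $r_1+\frac n{r_1}=r_2+\frac n{r_2}$ by $r_1r_2>0$, obtaining $r_1^2r_2+nr_2=r_1r_2^2+nr_1$, and rewrite this as $r_1r_2(r_1-r_2)=n(r_1-r_2)$, i.e.\ $(r_1-r_2)(r_1r_2-n)=0$. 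Hence either $r_1=r_2$ or $r_1r_2=n$; in the second case $\frac n{r_2}=r_1$ and $\frac n{r_1}=r_2$ automatically, which is the asserted alternative.

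For (ii), I would run the chain $r_1<r_2$, so $r_1r_2<r_2^2\le n$, hence $0<n-r_1r_2$; multiplying by $r_2-r_1>0$ gives $r_1r_2(r_2-r_1)<n(r_2-r_1)$, i.e.\ $r_1r_2^2+nr_1<r_1^2r_2+nr_2$, and dividing by $r_1r_2>0$ yields $r_2+\frac n{r_2}<r_1+\frac n{r_1}$. (This is just the strict decrease of $x\mapsto x+\frac nx$ on the interval $(0,\sqrt n\,]$; one could also read it off from Lemma~3.1, but the computation above is shorter and uses neither integrality nor divisibility.) For (iii), I would multiply $r_1^2>n$ by $n>0$ to get $nr_1^2>n^2$, whence $\bigl(\frac n{r_1}\bigr)^2=\frac{n^2}{r_1^2}<n$.

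Everything here is routine, so there is no genuine obstacle beyond bookkeeping. The two places that do need care are the sign of the cancelled factor $r_1-r_2$ (resp.\ $r_2-r_1$) in (i) and (ii), so that an equality, resp.\ a strict inequality, is preserved; and the observation that the step $r_1r_2<r_2^2$ in (ii) relies on $r_1<r_2$ together with $r_2>0$, while (iii) tacitly uses $n\ge1$.
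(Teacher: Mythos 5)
Your proposal is correct and follows essentially the same route as the paper: clearing denominators to get $r_1^2r_2+nr_2=r_1r_2^2+nr_1$ and factoring out $r_2-r_1$ for (i), the identical chain $r_1r_2(r_2-r_1)<r_2^2(r_2-r_1)\le n(r_2-r_1)$ for (ii), and the same one-line computation for (iii). Your side remarks about the sign of the cancelled factor and the implicit assumption $n\ge1$ are accurate but do not change the argument.
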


\begin{proof} (i) $r_1^2r_2+nr_2=r_1r_2^2+nr_1$ and $n(r_2-r_1)=r_1r_2(r_2-r_1)$.\newline
(ii) $r_1r_2^2-r_1^2r_2=r_1r_2(r_2-r_1)<r_2^2(r_2-r_1)\le n(r_2-r_1)=nr_2-nr_1$, $r_1r_2\left(r_2+\frac n{r_2}\right)=r_1r_2^2+r_1n<r_1^2r_2+r_2n=r_1r_2\left(r_1+\frac n{r_1}\right)$, $r_2+\frac n{r_2}<r_1+\frac n{r_1}$.\newline
(iii) $\left(\frac n{r_1}\right)^2=\frac{n^2}{r_1^2}<\frac{n^2}n=n$.\end{proof}
 
Being $n\in\N$, let $\e(n)$ designate the greatest integer such that $\e(n)|n$ and $\e(n)^2\le n$. Notice that $\frac n{\e(n)}$ is just the smallest positive integer $m$ dividing $n$ such that $n\le m^2$.

\begin{prop} Let $n\in\N$. Then:\newline
{\rm(i)} $\d(n)=\e(n)+\frac n{\e(n)}\,(=\frac{\e(n)+n}{\e(n)})$.\newline
{\rm(ii)} If $r\in\N$, $r|n$, $r\ne\e(n)$, $r\ne\frac n{\e(n)}$ then $\d(n)<r+\frac nr$.\end{prop}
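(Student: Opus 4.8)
The plan is to reduce the statement to the monotonicity contained in Lemma 3.2. The first step is to rewrite $\d(n)$ explicitly: for $n\ge1$ we have $\B(n)\subseteq\N\times\N$, and $(r,s)\in\B(n)$ holds precisely when $r$ is a positive divisor of $n$ with $r^2\le n$ and $s=\frac nr$ (the condition $r\le s$ being equivalent to $r^2\le n$). Hence $\d(n)=\min\bigl\{r+\frac nr\mid r\in\N,\ r\mid n,\ r^2\le n\bigr\}$. Since $\e(n)\mid n$ and $\e(n)^2\le n$, the pair $\bigl(\e(n),\frac n{\e(n)}\bigr)$ belongs to $\B(n)$, so $\d(n)\le\e(n)+\frac n{\e(n)}$.

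For the reverse inequality, hence for (i), I would take an arbitrary $(r,s)\in\B(n)$; then $r\mid n$ and $r^2\le n$, so by the maximality in the definition of $\e(n)$ we get $r\le\e(n)$. If $r=\e(n)$ there is nothing to prove, and if $r<\e(n)$ then Lemma 3.2(ii), applied with $r_1=r$ and $r_2=\e(n)$ (permissible because $\e(n)^2\le n$), gives $\e(n)+\frac n{\e(n)}<r+\frac nr=r+s$. Thus $\e(n)+\frac n{\e(n)}$ is the minimum and, in passing, it is attained at no divisor of $n$ with square $\le n$ other than $\e(n)$ itself.

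For (ii), fix $r\mid n$ with $r\ne\e(n)$ and $r\ne\frac n{\e(n)}$, and split into cases according to $r^2$ versus $n$. First, $r^2=n$ cannot occur: it would force both $r\le\e(n)$ (by maximality) and $\e(n)\le r$ (from $\e(n)^2\le n=r^2$), so $r=\e(n)$, a contradiction. If $r^2<n$, then $r\le\e(n)$ and $r\ne\e(n)$ give $r<\e(n)$, and the computation of the previous paragraph yields $\d(n)<r+\frac nr$. If $r^2>n$, I pass to $r':=\frac nr\in\N$; Lemma 3.2(iii) gives $(r')^2<n$, and $r'\ne\e(n)$ because $r'=\e(n)$ would mean $r=\frac n{\e(n)}$, contrary to assumption. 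Hence $r'<\e(n)$, and Lemma 3.2(ii) yields $\d(n)=\e(n)+\frac n{\e(n)}<r'+\frac n{r'}=\frac nr+r=r+\frac nr$.

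The only delicate point is the ``mirror'' case $r^2>n$: there one must replace $r$ by the complementary divisor $n/r$, use Lemma 3.2(iii) to bring it below $\sqrt n$, and observe that the two forbidden values $\e(n)$ and $\frac n{\e(n)}$ are exactly the ones needed to exclude the unique minimizer on each side of $\sqrt n$. Everything else is a routine application of the already established lemmas.
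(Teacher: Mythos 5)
Your proof is correct and follows essentially the same route as the paper: both reduce the claim to Lemma 3.2(ii), using the maximality of $\e(n)$ to show every pair in $\B(n)$ has first coordinate at most $\e(n)$, and both handle the divisors exceeding $\sqrt n$ by passing to the complementary divisor via the computation in Lemma 3.2(iii). If anything, your case analysis for part (ii) is more explicit than the paper's, which leaves that half largely implicit.
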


\begin{proof} Let $(r,s)\in\B(n)$. If $r<\e(n)$ then $\d(n)<r+s$ by 3.2(ii). If $\e(n)<r$ then $n<r^2$, and so $s^2=\frac{n^2}{r^2}<\frac{n^2}n=n$, $s\le\e(n)$. Since $r\le s$, we get $r\le\e(n)$, a contradiction.\end{proof}

\begin{prop} Let $n\in\N$. Then:\newline
{\rm(i)} $\d(n)\ge2n^{\frac12}$.\newline
{\rm(ii)} $\d(n)=2n^{\frac12}$ if and only if $n=k^2$, $k\ge1\ (\d(k^2)=2(k))$.\end{prop}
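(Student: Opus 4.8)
The plan is to combine Proposition 3.4(i), which already identifies $\d(n)=\e(n)+\frac n{\e(n)}$, with the elementary AM--GM-type inequality of Lemma 3.1. For part (i), I would simply apply Lemma 3.1(i) with $r_1=\e(n)>0$ and $r_2=n\ge 0$; since $\e(n)\mid n$ this substitution is legitimate and yields $\d(n)=\e(n)+\frac n{\e(n)}\ge 2n^{\frac12}$ at once. No case analysis is needed here.

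For the ``if'' direction of part (ii), suppose $n=k^2$ with $k\ge1$. Then $k\mid n$ and $k^2\le n$, and $k$ is the largest integer with this property (any larger divisor $m$ of $k^2$ would satisfy $m^2>k^2=n$), so $\e(k^2)=k$. Hence $\d(k^2)=k+\frac{k^2}{k}=2k=2(k^2)^{\frac12}$, giving equality.

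For the ``only if'' direction, assume $\d(n)=2n^{\frac12}$. Then $\e(n)+\frac n{\e(n)}=2n^{\frac12}$, so Lemma 3.1(ii) forces $\e(n)=\frac n{\e(n)}$, i.e.\ $n=\e(n)^2$, a perfect square; take $k=\e(n)$. I would also remark that this is consistent with the computation $\d(k^2)=2k$ just made.

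The argument is essentially a two-line deduction, so there is no real obstacle; the only mild point of care is justifying $\e(n)=k$ when $n=k^2$ (i.e.\ that no proper multiple of $k$ dividing $k^2$ can have square $\le k^2$), which is immediate. Everything else follows formally from the quoted results, so I would keep the proof very short.
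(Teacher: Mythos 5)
Your proof is correct and follows essentially the same route as the paper: both rest on $\d(n)=\e(n)+\frac{n}{\e(n)}$ from Proposition 3.3(i) together with the inequality $(n^{\frac12}-\e(n))^2\ge0$, which the paper expands inline and you obtain by citing Lemma 3.1. If anything, yours is slightly more complete, since the paper leaves the equality case (ii) and the verification $\e(k^2)=k$ implicit.
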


\begin{proof} We have $n-2\e(n)n^{\frac12}+\e(n)^2=(n^{\frac12}-\e(n))^2\ge0$, so that $n+\e(n)^2\ge2\e(n)n^{\frac12}$ and $\d(n)=\e(n)+\frac n{\e(n)}\ge2n^{\frac12}$ (3.3(i)).\end{proof}

\begin{prop} Let $n\in\N$. Then:\newline
{\rm(i)} $2\le\d(n)\le n+1$.\newline
{\rm(ii)} $\d(n)=n+1$ if and only if either $n=1$ or $n$ is a prime number.\end{prop}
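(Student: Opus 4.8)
The plan is to derive both parts directly from material already in place, the only extra ingredient being Remark 2.4.

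First I would settle (i). The upper bound is immediate: since $n\ge1$ we have $(1,n)\in\B(n)$, so $\d(n)\le 1+n$. For the lower bound I note that $\B(n)\subseteq\N\times\N$ whenever $n\ge1$, so any $(r,s)\in\B(n)$ has $r,s\ge1$ and hence $r+s\ge2$; minimising over $\B(n)$ gives $\d(n)\ge2$. (One could equally quote 3.5(i), which gives $\d(n)\ge2n^{\frac12}\ge2$.)

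Next I would dispose of the easy (``if'') direction of (ii). If $n=1$ then $\B(1)=\{(1,1)\}$, so $\d(1)=2=n+1$. If $n$ is prime, then its only factorisation into a product of two positive integers in nondecreasing order is $1\cdot n$, so $\B(n)=\{(1,n)\}$ and again $\d(n)=1+n=n+1$.

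For the converse I would argue by contraposition: assuming $n\ne1$ and $n$ not prime, $n$ is composite, hence admits a divisor $r$ with $1<r<n$; set $s=n/r$, which likewise satisfies $1<s<n$, and after swapping $r$ and $s$ if necessary assume $r\le s$. Then $(r,s)\in\B(n)$ with $2\le r$, so Remark 2.4 yields $r+s\le n$, whence $\d(n)\le r+s\le n<n+1$, i.e.\ $\d(n)\ne n+1$. The only point requiring a moment's attention is precisely this last passage — that a composite $n$ genuinely possesses a divisor strictly between $1$ and $n$, so that Remark 2.4 applies with $r\ge2$ — but this is just the definition of compositeness, so no real obstacle arises; the proposition is in effect a corollary of Propositions 2.3, 3.3 and Remark 2.4.
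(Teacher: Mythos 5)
Your proof is correct and follows essentially the same route as the paper: exhibit $(1,n)\in\mathcal B(n)$ for the upper bound, and observe that any factorization with $2\le r$ forces $r+s\le n<n+1$, which is exactly the inequality the paper re-derives inline and which you instead quote from Remark 2.4. Your write-up is actually slightly more complete, since you also spell out the ``if'' direction of (ii), which the paper leaves implicit.
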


\begin{proof} If $(r,s)\in\B(n)$, $2\le r$, Then $s\le\frac n2$ and $r+s\le r+\frac n2\le s+\frac n2\le n<n+1$.\end{proof}

\begin{ex} \rm The following observation shall
speed up our progress:

\bigskip
$\begin{array}{c |c c c c c c c c c c c c c c c c c c}
n&1&2&3&4&5&6&7&8&9&10&11&12&13&14&15&16&17&18\\
\e(n)&1&1&1&2&1&2&1&2&3&2&1&3&1&2&3&4&1&3\\
\frac n{\e(n)}&1&2&3&2&5&3&7&4&3&5&11&4&13&7&5&4&17&6\\
\d(n)&2&3&4&4&6&5&8&6&6&7&12&7&14&9&8&8&18&9\\
\lfloor\frac n4\rfloor&0&0&0&1&1&1&1&2&2&2&2&3&3&3&3&4&4&4\\
\end{array}$

\bigskip

$\begin{array}{c |c c c c c c c c c c c c c c c c}
n&19&20&21&22&23&24&25&26&27&28&29&30&31&32&33&34\\
\e(n)&1&4&3&2&1&4&5&2&3&4&1&5&1&4&3&2\\
\frac n{\e(n)}&19&5&7&11&23&6&5&13&9&7&29&6&31&8&11&17\\
\d(n)&20&9&10&13&24&10&10&15&12&11&30&11&32&12&14&19\\
\lfloor\frac n4\rfloor&4&5&5&5&5&6&6&6&6&7&7&7&7&8&8&8
\end{array}$

\bigskip
$\begin{array}{c |c c c c c c c c c c c c c c c c}
n&35&36&37&38&39&40&41&42&43&44&45&46&47&48&49&50\\
\e(n&5&6&1&2&3&5&1&6&1&4&5&2&1&6&7&5\\
\frac n{\e(n)}&7&6&37&19&13&8&41&7&43&11&9&23&47&8&7&10\\
\d(n)&12&12&38&21&16&13&42&13&44&15&14&25&48&14&14&15\\
\lfloor\frac n4\rfloor&8&9&9&9&9&10&10&10&10&11&11&11&11&12&12&12
\end{array}$

\bigskip
$\begin{array}{c |c c c c c c c c c c c c c c c c}
n&51&52&53&54&55&56&57&58&59&60&61&62&63&64&65&66\\
\e(n)&3&4&1&6&5&7&3&2&1&6&1&2&7&8&5&6\\
\frac n{\e(n)}&17&13&53&9&11&8&19&29&59&10&61&31&9&8&13&11\\
\d(n)&20&17&54&15&16&15&22&31&60&16&62&33&16&16&18&17\\
\lfloor\frac n4\rfloor&12&13&13&13&13&14&14&14&14&15&15&15&15&16&16&16
\end{array}$

\bigskip
$\begin{array}{c |c c c c c c c c c c c c c c c c}
n&67&68&69&70&71&72&73&74&75&76&77&78&79&80&81&82\\
\e(n)&1&4&3&7&1&8&1&2&5&4&7&6&1&8&9&2\\
\frac n{\e(n)}&67&17&23&10&71&9&73&37&15&19&11&13&79&10&9&41\\
\d(n)&68&21&26&17&72&17&74&39&20&23&18&19&80&18&18&43\\
\lfloor\frac n4\rfloor&16&17&17&17&17&18&18&18&18&19&19&19&19&20&20&20
\end{array}$

\bigskip
$\begin{array}{c |c c c c c c c c c c c c c c c c}
n&83&84&85&86&87&88&89&90&91&92&93&94&95&96&97&98\\
\e(n)&1&7&5&2&3&8&1&9&7&4&3&2&5&8&1&7\\
\frac n{\e(n)}&83&12&17&43&29&11&89&10&13&23&31&47&19&12&97&14\\
\d(n)&84&19&22&45&32&19&90&19&20&24&34&49&24&20&98&21\\
\lfloor\frac n4\rfloor&20&21&21&21&21&22&22&22&22&23&23&23&23&24&24&24
\end{array}$

\bigskip
$\begin{array}{c |c c c c c c c c c c c c c}
n&99&100&101&102&103&104&105&106&107&108&109&110&111\\
\e(n)&9&10&1&6&1&8&7&2&1&9&1&10&3\\
\frac n{\e(n)}&11&10&101&17&103&13&15&53&107&12&109&11&37\\
\d(n)&20&20&102&23&104&21&22&55&108&21&110&21&40\\
\lfloor\frac n4\rfloor&24&25&25&25&25&26&26&26&26&27&27&27&27
\end{array}$

\begin{lemma} Let $1\le n\le111$. Then:\newline
{\rm(i)} $\d(n)<\frac n4$ just for $n=${\rm\ 70, 72, 77, 78, 80, 81, 84, 88, 90, 91, 96, 98, 99, 100, 102, 104, 105, 108, 110}.\newline
{\rm(ii)} $\d(n)=\frac n4$ only for $n=64$.\end{lemma}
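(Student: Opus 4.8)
The plan is to reduce the claim to a finite inspection of the table displayed above. First I would shrink the range: by 3.4(i) we have $\d(n)\ge2n^{\frac12}$, so an inequality $\d(n)\le\frac n4$ forces $2n^{\frac12}\le\frac n4$, i.e.\ $n^{\frac12}\ge8$, i.e.\ $n\ge64$. Hence for $1\le n\le63$ one has $\d(n)\ge2n^{\frac12}>\frac n4$ (since $64>n$), so neither (i) nor (ii) can occur there, and only the $48$ values $64\le n\le111$ remain to be examined. For each of these, $\d(n)=\e(n)+\frac n{\e(n)}$ (3.3(i)) and $\lfloor\frac n4\rfloor$ are already recorded in the table.

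Next I would rephrase the two conditions in terms of the integer $\lfloor\frac n4\rfloor$ appearing in the table, using $\d(n)\in\N$. If $4\nmid n$ then $\frac n4\notin\N$, so $\d(n)<\frac n4$ is equivalent to $\d(n)\le\lfloor\frac n4\rfloor$, while $\d(n)=\frac n4$ is impossible; if $4\mid n$ then $\lfloor\frac n4\rfloor=\frac n4$, so $\d(n)<\frac n4$ means $\d(n)\le\lfloor\frac n4\rfloor-1$ and $\d(n)=\frac n4$ means $\d(n)=\lfloor\frac n4\rfloor$. As an extra trimming, dividing $\e(n)+\frac n{\e(n)}\le\frac n4$ by $n$ gives $\frac1{\e(n)}<\frac14$, so $\e(n)\ge5$; this discards at once all $n$ in the range with $\e(n)\le4$ (primes, and numbers of the form $2p$, $3p$, $4p$). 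Running down the remaining rows of the table for $64\le n\le111$ one then reads off exactly the nineteen integers listed in (i) --- eight of them, $72,80,84,88,96,100,104,108$, divisible by $4$, and the other eleven, $70,77,78,81,90,91,98,99,102,105,110$, not --- together with the single value $n=64$, where $\d(64)=8+8=16=\frac{64}{4}$, which accounts for (ii).

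The only thing I expect to require care is the bookkeeping, above all the boundary between (i) and (ii): when $4\nmid n$ the equality $\d(n)=\lfloor\frac n4\rfloor$ still gives $\d(n)<\frac n4$ (this is what happens for $n=70$ and $n=78$), whereas $\d(64)=16$ is a genuine equality with $\frac n4$. As a cross-check I would invoke 3.4(ii): the perfect squares in the range are $64,81,100$, with $\d(n)=2n^{\frac12}$ equal to $16,18,20$ respectively; of these only $64$ satisfies $\d(n)=\frac n4$, while $81$ and $100$ satisfy $\d(n)<\frac n4$ and hence belong to (i). Reading off the table then settles both parts, and I anticipate no genuine obstacle beyond this routine verification.
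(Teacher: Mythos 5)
Your proof is correct and takes essentially the same route as the paper, whose entire argument is a direct inspection of the table in Example 3.6; your preliminary reductions via 3.4(i) (forcing $n\ge64$) and via $\e(n)\ge5$ are sound and merely shorten that finite check. The bookkeeping around $4\mid n$ versus $4\nmid n$ and the identification of $n=64$ as the unique equality case all match the table.
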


\begin{proof} See the foregoing example.\end{proof}

\begin{lemma} Let $n\in\N$ be such that $\e(n)\ge8$. Then:\newline
{\rm(i)} $n\ge64$ and $\d(n)\le\frac n4$.\newline
{\rm(ii)} $\d(n)<\frac n4$, provided that $n\ge65$.\end{lemma}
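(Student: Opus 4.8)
The plan is to put $e=\e(n)$ and $m=\frac n{\e(n)}$, so that $em=n$ and, by 3.3(i), $\d(n)=e+m$. The hypothesis $\e(n)\ge8$ together with $e^2\le n=em$ immediately gives $m\ge e\ge8$, whence $n=em\ge64$; this already settles the first half of (i).

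For the inequality $\d(n)\le\frac n4$ I would reduce it to an equivalent polynomial statement: $4(e+m)\le em$ is the same as $em-4e-4m+16\ge16$, i.e., after the elementary factorization $em-4e-4m+16=(e-4)(m-4)$, the same as $(e-4)(m-4)\ge16$. Now $e\ge8$ forces $e-4\ge4$, and $m\ge e\ge8$ forces $m-4\ge4$, so the product $(e-4)(m-4)$ is at least $16$, which is exactly what is needed. Hence $4\d(n)=4(e+m)\le em=n$, proving (i).

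For (ii) I would look at when equality can hold in the last step. Equality $4\d(n)=n$ means $(e-4)(m-4)=16$; but with both factors $\ge4$ this is possible only when $e-4=m-4=4$, i.e., $e=m=8$, i.e., $n=64$. So as soon as $n\ge65$ we necessarily have $(e-4)(m-4)>16$, hence $4\d(n)<n$, which is (ii).

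There is no real obstacle here: the whole argument hinges on the single factoring identity $em-4e-4m+16=(e-4)(m-4)$, after which everything follows from $e\ge8$ and the trivial bound $m\ge e$. The only point needing a little attention is the equality discussion in (ii) — making sure $n=64$ is genuinely the unique exceptional value — and that is immediate once both factors are seen to be $\ge4$.
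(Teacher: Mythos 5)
Your proof is correct and follows essentially the same route as the paper: both set $r=\e(n)$, $s=n/\e(n)$, use $\d(n)=r+s$ with $8\le r\le s$ to establish $4(r+s)\le rs$, and trace the equality case back to $r=s=8$, $n=64$. The only difference is cosmetic — the paper chains $r+s\le 2s\le\frac{rs}4$, while you verify the same inequality via the factorization $(r-4)(s-4)\ge16$, which arguably makes the equality discussion a bit cleaner.
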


\begin{proof} Put $r=\e(n)$ and $s=\frac n{\e(n)}$. We have $n=rs$, $8\le r\le s$ and $\d(n)=r+s=\left(\frac{4r}s+4\right)\frac s4\le\frac{8s}4\le\frac{rs}4=\frac n4$. If, moreover, $\d(n)=\frac n4$ then $\frac{4r}s=4$, $r=s$, $n=\e(n)^2$, $8s=rs$, $\e(n)=r=8$ and $n=64$.\end{proof}

\begin{obser} \rm Let $p\in\N$, $p$ a prime.\newline
(i) $\d(p)=p+1>\frac p4$.\newline
(ii) $\d(2p)=p+2>\frac p2=\frac{2p}4$.\newline
(iii) $\d(3p)=p+3>\frac{3p}4$.\newline
(iv) $\d(4p)=p+4>p=\frac{4p}4$.\newline
(v) $\d(5p)=p+5>\frac{5p}4$ for $p\le19$.\newline
(vi) $\d(5p)=p+5<\frac{5p}4$ for $p\ge23$.\newline
(vii) $\d(12)=7>3=\frac{12}4$.\newline
(viii) $\d(6p)=p+6>\frac{6p}4$ for $3\le p\le 11$.\newline
(ix) $\d(6p)=p+6<\frac{6p}4$ for $p\ge13$.\newline
(x) $\d(7p)=p+7>\frac{7p}4$ for $p\le7$.\newline
(xi) $\d(7p)=p+7<\frac{7p}4$ for $p\ge11$.\newline
(xii) $\d(16)=8>4=\frac{16}4$.\newline
(xiii) $\d(24)=10>6=\frac{24}4$.\newline
(xiv) $\d(8p)=p+8>\frac{8p}4$ for $p=5,7$.\newline
(xv) $\d(8p)=p+8<\frac{8p}4$ for $p\ge11$.\end{obser}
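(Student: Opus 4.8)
The plan is to funnel everything through Proposition~3.3, which gives $\delta(n)=\varepsilon(n)+\frac{n}{\varepsilon(n)}$, and to treat each of (i)--(xv) as two separate tasks: first evaluate $\delta(kp)$, where $k\in\{1,\dots,8\}$ is the fixed small cofactor, and then compare that value with $kp/4$ by a one-line computation.

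The engine of the whole argument is a single auxiliary claim: \emph{if $p$ is a prime and $k\le p$, then $\varepsilon(kp)=k$, and hence $\delta(kp)=k+p$.} I would prove it by disposing of the case $k=p$ at once (then $kp=p^2$, whose only divisors are $1,p,p^2$, so $\varepsilon(p^2)=p$), and for $k<p$ noting that $k$ is a divisor of $kp$ with $k^2\le kp$, so $\varepsilon(kp)\ge k$; moreover, if some divisor $d$ of $kp$ had $d>k$ together with $d^2\le kp$, then $d^2\le kp<p^2$ would force $d<p$, hence $\gcd(d,p)=1$, hence $d\mid k$ and therefore $d\le k$ --- a contradiction. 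Thus $\varepsilon(kp)=k$ and $\delta(kp)=k+\frac{kp}{k}=k+p$.

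This claim settles (i) and (ii) outright, and it covers the large-$p$ tail of every remaining item: $p\ge3$ in (iii), $p\ge5$ in (iv)--(vi), $p\ge7$ in (viii)--(xi), and $p\ge11$ in (xiv)--(xv). What is left is the handful of cases with $p<k\le8$; since each of those has $kp\le56$, and also $12,16,24\le111$, I would simply read the relevant values $\delta(kp)$ off the table in Example~3.6. One then observes that $\delta(kp)=k+p$ still holds in every one of those cases \emph{except} for $kp=12,16,24$ --- which is precisely why (viii) excludes $p=2$, why (xiv)--(xv) exclude $p=2,3$, and why $\delta(12)=7$, $\delta(16)=8$, $\delta(24)=10$ are recorded on their own in (vii), (xii), (xiii).

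With $\delta(kp)=k+p$ in hand (the three anomalies reducing to the numerical checks $7>3$, $8>4$, $10>6$), each inequality amounts to comparing $k+p$ with $kp/4$; clearing the denominator, $k+p\gtrless kp/4\iff(k-4)p\lessgtr4k$. For $k\le4$ the right side satisfies $(k-4)p\le0<4k$, so $k+p>kp/4$ unconditionally, which is (i)--(iv); for $k=5,6,7,8$ the condition $(k-4)p<4k$ reads $p<20$, $2p<24$, $3p<28$, $4p<32$ respectively, and as $p$ ranges over primes these cut exactly at $p\le19$ versus $p\ge23$, at $p\le11$ versus $p\ge13$, at $p\le7$ versus $p\ge11$, and --- among the primes $p\ge5$, for which $\delta(8p)=p+8$ --- at $p\in\{5,7\}$ versus $p\ge11$, matching (v)/(vi), (viii)/(ix), (x)/(xi), (xiv)/(xv). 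The genuinely delicate point is the bookkeeping of the sub-$k$ primes: each pair $(k,p)$ with $p<k$ must be either confirmed to give $\delta(kp)=k+p$ or correctly flagged as one of $12,16,24$; everything else is routine arithmetic.
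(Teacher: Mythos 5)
Your proof is correct, and it supplies something the paper itself omits: Observation 3.9 is stated inside Example 3.6 with no proof environment at all, the evident intention being that the reader check the small cases against the table and take the formula $\delta(kp)=k+p$ on faith for large $p$. Your auxiliary claim --- that $\varepsilon(kp)=k$ whenever $p$ is prime and $k\le p$, proved via $d^2\le kp<p^2\Rightarrow d<p\Rightarrow\gcd(d,p)=1\Rightarrow d\mid k$ --- is exactly the general lemma needed to handle the unbounded tails in (v), (vi), (ix), (xi), (xv), which no finite table can cover, and it is fully in the spirit of the paper's Proposition 3.3. Your bookkeeping of the sub-$k$ primes is also accurate: among the pairs with $p<k\le 8$ the identity $\delta(kp)=k+p$ fails only for $12=6\cdot2$, $16=8\cdot2$ and $24=8\cdot3$ (note that $\delta(12)=7$ does equal $4+3$, so item (iv) is safe at $p=3$; the anomaly at $12$ concerns only $k=6$), and these are precisely the numbers the Observation quarantines in (vii), (xii), (xiii). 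The reduction of each inequality to $(k-4)p\lessgtr 4k$ and the resulting cutoffs $p<20$, $p<12$, $p<28/3$, $p<8$ match the stated prime ranges exactly, with no prime landing on a boundary. No gaps.
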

\end{ex}

\begin{prop} Let $n\in\N$. Then:\newline
{\rm(i)} $\d(n)<\frac n4$ if and only if the following two conditions are satisfied:\begin{enumerate}
\item[\rm(a)] $n\ne p,2p,3p,4p$ for each prime $p$.
\item[\rm(b)] $n\ne$ {\rm 1, 16, 18, 24, 25, 27, 30, 32, 35, 36, 40, 42, 45, 48, 49, 50, 54, 55, 56, 60, 63, 64, 65, 66, 75, 85, 95} ($27=3^3$ numbers).\end{enumerate}
{\rm(ii)} $\d(n)=\frac n4$ if and only if $n=64$.\end{prop}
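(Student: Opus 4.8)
The plan is to base everything on the formula $\d(n)=\e(n)+\frac n{\e(n)}$ of Proposition 3.3(i), after rewriting the target inequality multiplicatively. Put $r=\e(n)$ and $m=\frac n{\e(n)}$, so that $n=rm$, and $r\le m$ because $\e(n)^2\le n$. Clearing denominators gives
\[
\d(n)<\tfrac n4\iff 4r+4m<rm\iff(r-4)(m-4)>16 ,
\]
and in the same way $\d(n)=\frac n4\iff(r-4)(m-4)=16$. Both parts thereby reduce to controlling the sizes of $\e(n)$ and $\frac n{\e(n)}$.

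For the implication $\d(n)<\frac n4\Rightarrow$ (a) and (b) I would argue contrapositively. If $n=p,2p,3p$ or $4p$ for a prime $p$, then Observation 3.9(i)--(iv) gives $\d(n)>\frac n4$ at once. If $n$ is one of the $27$ numbers in (b), then $n\le111$ and Lemma 3.7 settles it: none of those numbers appears among the $19$ values for which $\d(n)<\frac n4$, so $\d(n)\ge\frac n4$ in each case (equality occurring only at $n=64$). Hence $\d(n)<\frac n4$ forces both (a) and (b).

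For the converse, assume (a) and (b). The first step is to show $\e(n)\ge5$. Here I use that any divisor $d$ of $n$ with $d^2\le n$ satisfies $d\le\e(n)$. If $\e(n)\le4$ and $n$ carried, besides its forced small factor ($2$, $3$ or $4$), two more prime factors $q_1\le q_2$ (counted with multiplicity), then $q_1\le\sqrt{q_1q_2}\le\sqrt n$, so $q_1$, or $q_1$ times the forced factor, would be a divisor of $n$ larger than $\e(n)$ yet with square at most $n$ — impossible. Together with the elementary fact that $8\mid n$ and $n\ge64$ force $\e(n)\ge8$, this confines $\e(n)\le4$ to the cases in which $n$ is a prime multiple $p,2p,3p,4p$ or one of $1,16,18,24,27,32$ — all forbidden by (a) or (b). So $r=\e(n)\ge5$, whence $m\ge r\ge5$. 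If $r\ge8$, then $(r-4)(m-4)\ge4\cdot4=16$ with equality only at $r=m=8$, i.e.\ $n=64$, which (b) excludes; this is in essence Lemma 3.8. If $r\in\{5,6,7\}$, the inequality $(r-4)(m-4)>16$ amounts to $m\ge21$, $m\ge13$, $m\ge10$ respectively, so I need only deal with the finitely many $n=rm$ for which $m$ lies below the relevant threshold; after discarding those whose actual $\e$-value differs from $r$, the rest all appear in the list (b) and are therefore excluded by hypothesis (e.g.\ for $r=5$ the cases $m\le20$ with $\e(n)=5$ are $n\in\{25,30,35,40,45,50,55,65,75,85,95\}$, all in (b)). Hence (a) and (b) force $(r-4)(m-4)>16$, that is $\d(n)<\frac n4$, which proves (i).

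Part (ii) then follows quickly: from $\d(n)=\frac n4$ we get $(r-4)(m-4)=16$ with $r=\e(n)\le m$; since $r\le4$ would give $(r-4)(m-4)\le9<16$, we have $r\ge5$. Then $r=5$ and $r=6$ force $m=20$ and $m=12$, i.e.\ $n=100$ and $n=72$, which is impossible because $\e(100)=10$ and $\e(72)=8$; $r=7$ forces $m=\frac{28}3\notin\N$; and $r\ge8$ forces $r=m=8$, i.e.\ $n=64$, for which indeed $\d(64)=8+8=16=\frac{64}4$. The step I expect to be the main obstacle is the last sweep in the converse of (i): one must check, for each $r\in\{5,6,7\}$ separately, that every sub-threshold $n=rm$ with $\e(n)=r$ genuinely occurs in (b), so that the $27$ numbers of (b) are matched exactly — nothing omitted, nothing spurious; the divisor estimate giving $\e(n)\ge5$ also calls for a little care, though it is otherwise routine.
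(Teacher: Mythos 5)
Your argument is correct, and it is organized quite differently from the paper's. The paper proves the converse direction by splitting on the largest prime factor of $n$: first the primes $p\ge11$ dividing $n$ (via Lemma 3.8 and Observation 3.9(vi),(ix),(xi)), then the cases $7\mid n$, $5\mid n$, and finally $n=2^a3^b$, each time bounding $n$ by a finite list that is checked against the table. You instead rewrite $\delta(n)<\frac n4$ multiplicatively as $(\varepsilon(n)-4)\bigl(\tfrac n{\varepsilon(n)}-4\bigr)>16$ and split on the value of $\varepsilon(n)$; this makes the thresholds ($m\ge21,13,10$ for $\varepsilon(n)=5,6,7$) and the equality case $n=64$ in part (ii) completely transparent, and it buys a cleaner, more uniform finite check (I verified your three residual lists $\{25,30,35,40,45,50,55,65,75,85,95\}$, $\{36,42,48,54,60,66\}$, $\{49,56,63\}$, which together with $\{1,16,18,24,27,32,64\}$ account for exactly the $27$ numbers of (b)). The one step you should write out in full is the classification of the $n$ with $\varepsilon(n)\le4$: your sentence about the ``forced small factor'' and ``two more prime factors'' does not, as stated, cover all shapes of $n$ (e.g.\ the pure powers $2^a3^b$), and the clean way to do it is a short induction on the number of prime factors of $n$ counted with multiplicity, pairing them off to produce a divisor $d$ with $5\le d^2\le n$ whenever $n$ is not of the form $p,2p,3p,4p$ and not one of $1,16,18,24,27,32$. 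The claim is true, so this is a matter of filling in detail rather than a flaw in the approach; with that paragraph supplied, your proof is complete and, to my eye, tidier than the original.
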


\begin{proof} First of all, if $n=p,2p,3p,4p$, $p$ being a prime, then, by 3.9(i),(ii),(iii),(iv), $\d(n)<\frac n4$ . Of course, $\d(1)=2>\frac14$. Assume, therefore, that $n\ne1,2p,3p,4p$. Then, in particular, $n\ge16$. The rest of the proof is divided into four parts:\newline
(1) Let $p|n$, $p\ge11$, $p$ a prime, and let $\d(n)\ge\frac n4$. We show that $n\in\{55,65,66,85,95\}$.

Indeed, it follows from 3.8 that $\e(n)\le7$. Since $p\ge11$, we have $n\ne7p$ by 3.9(xi) and $n<p^2$, $\frac np<p$, $\frac np\le7$, $n\le 7p$. Consequently, $n=5p,6p$. If $n=5p$ then $p=11,13,17,19$ and $n=55,65,85,95$ by 3.9(vi). If $n=6p$ then $=66$ by 3.9(ix).\newline
(2) Let $7|n$ and $\d(n)\ge\frac n4$. We show that $n\in\{35,42,49,56,63\}$.

As $n\ge16$, we have $n=7m$, $m\ge3$. If $m\ge10$ then $7\le\e(n)$, $\d(n)\le7+m<\frac{7m}4$, a contradiction. Thus $5\le m\le9$ and $n=35,42,49,56,63$.\newline
(3) Let $5|n$ and $\d(n)\le\frac n4$. We show that $n=25,30,35,40,45,50,55,60,65,75,85,95$.

We have $n=5m$, $m\ge5$ ($n\ge16$ and $n\ne20=4\cdot5$). If $m\ge21$ then $5\le\e(n)$ and $\d(n)\le5+m<\frac{5m}4=\frac n4$, a contradiction. Tus $5\le m\le20$ and it follows from 3.7 that either $5\le m\le13$ or $m=15,17,19$. Consequently, $n=25,30,35,40,45,50,55,60,65,75,85,95$.\newline
(4) Finally, let $p<5$ whenever $p$ is a prime dividng $n$ and let $\d(n)\ge\frac n4$. We show that $n=16,18,24,27,32,36,48,54,64$ ($9=3^2$ numbers).

It follows from our assumptions that $n=2^a\cdot3^b$, $a,b\in\N_0$, $(a,b)\ne$ (0,0), (0,1), (1,0), (1,1), (0,2), (2,0), (2,1), (3,0). If $\e(n)\ge8$ then $n=64$ due to 3.8 and 3.7. Suppose, henceforth, that $\e(n)\le7$. Then 64 and 81 do not divide $n$, so that $a\le5$, $b\le3$ and $n\le864$. Denote $c=\lfloor\frac a2\rfloor$ and $d=\lfloor\frac b2\rfloor$. Evidently, $c,d\in\N_0$ and $2^c\cdot3^d\le\e(n)\le7$. From this, we conclude easily that $(c,d)=$ (0,0), (0,1), (1,0), (1,1), (2,0) and 
$(a,b)=$ (0,3), (1,2), (1,3), (3,1), (2,2), (2,3), (3,2), (3,3), (4,0), (4,1), (5,0), (5,1). Consequently, $n=$ 27, 18, 54, 24, 36, 108, 72, 216, 16, 48, 32, 96. Since 
$\e(72)=\e(96)=8$, $\e(108)=9$ and $\e(216)=12$, the proof is finished.
\end{proof}

\section{Second bunch of observations}

\begin{prop}
Let $n\in\N$. Then:\newline{\rm(i)} $\d(n)<\frac{n+12}4$ if and only if the following two conditions are satisfied:\begin{enumerate}
\item[\rm(a)] $n\ne p,2p,3p,4p$ for each odd prime $p$.
\item[\rm(b)] $n\ne 4,6,8,16,18,24,25,27,30,35,36,40$.\end{enumerate}
{\rm(ii)} $\d(n)=\frac{n+12}4$ if and only if $n=4,36,40$.
\end{prop}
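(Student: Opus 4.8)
The plan is to bootstrap from Proposition~3.10, the characterisation of $\d(n)<\frac n4$. Since $\frac{n+12}4=\frac n4+3$, the implication $\d(n)<\frac n4\Rightarrow\d(n)<\frac{n+12}4$ is immediate, so the set $S$ of those $n$ with $\d(n)\ge\frac{n+12}4$ is contained in the set of those with $\d(n)\ge\frac n4$, which by Proposition~3.10(i) is precisely $\{\,p,2p,3p,4p:p\text{ prime}\,\}$ together with the $27$-element list $L$ appearing in 3.10(i)(b). Thus the whole problem collapses to a finite check: for each member of $L$ and for each of the four families $n=p,2p,3p,4p$ (with $p$ prime), decide whether $\d(n)\ge\frac{n+12}4$.

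For those families one has $\d(kp)=k+p$ for every prime $p$ and $k\in\{1,2,3,4\}$ (this is Observation~3.9(i)--(iv), or a one-line computation with $\B(kp)$), so comparing with $\frac{kp+12}4$ reduces to the elementary equivalences $4(p+1)\ge p+12\iff p\ge3$, $4(p+2)\ge 2p+12\iff p\ge2$, $4(p+3)\ge 3p+12\iff p\ge0$, and $4(p+4)\ge 4p+12$ always. Hence $n=p$ lies in $S$ exactly for odd primes $p$, while $n=2p,3p,4p$ always lie in $S$; in particular $4=2\cdot2$, $6=2\cdot3$ and $8=4\cdot2$ lie in $S$. I would then sweep through $L$ against the tables of Example~3.6: the members of $L$ lying in $S$ turn out to be $16,18,24,25,27,30,32,35,36,40$ (for instance $\d(40)=13=\frac{52}4$, whereas $\d(42)=13<\frac{54}4$), while every other member of $L$, $n=1$ included, satisfies $\d(n)<\frac{n+12}4$.

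Putting the pieces together, $S=\{\,p,2p,3p,4p:p\text{ an odd prime}\,\}\cup\{4,8,16,18,24,25,27,30,32,35,36,40\}$, the stray value $6=2\cdot3$ having been absorbed into the first set; this is the set excluded by conditions (a) and (b) of~(i), which gives~(i). For~(ii) equality forces $n\in S$, and one inspects the computations just made: $\d(p)$, $\d(3p)$ and $\d(4p)$ never equal $\frac{n+12}4$, $\d(2p)=p+2=\frac{2p+12}4$ holds only for $p=2$ (that is, $n=4$), and among the ten relevant members of $L$ only $\d(36)=12=\frac{48}4$ and $\d(40)=13=\frac{52}4$ are equalities; hence $\d(n)=\frac{n+12}4$ exactly for $n\in\{4,36,40\}$, which is~(ii). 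A uniform cross-check is the identity $n+12-4\d(n)=(r-4)(s-4)-4$, valid for $r=\e(n)$, $s=\frac n{\e(n)}$ by Proposition~3.3(i): then $\d(n)=\frac{n+12}4$ amounts to $(r-4)(s-4)=4$, whose only solutions with $1\le r\le s$ are $(r,s)=(2,2),(5,8),(6,6)$, i.e.\ $n=4,36,40$.

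The single step that calls for any attention --- and even it is just a bounded computation, not a genuine obstacle --- is the sweep through the $27$ numbers of $L$: for each, one reads $\d(n)$ off the table of Example~3.6 and compares $4\d(n)$ with $n+12$. Everything else is the handful of linear inequalities above together with the containment inherited from Proposition~3.10.
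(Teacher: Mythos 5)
Your proof is correct and follows essentially the same route as the paper: both arguments reduce the problem via Proposition 3.10 to the prime families $p,2p,3p,4p$ plus a finite sweep, settle the families using $\delta(kp)=p+k$ from Observation 3.9, and check the remaining members of the 27-element list against the table of Example 3.6 (your identity $n+12-4\delta(n)=(\varepsilon(n)-4)(\tfrac n{\varepsilon(n)}-4)-4$ is a pleasant extra confirmation of part (ii) that the paper does not have). One point worth recording: your exceptional set correctly contains $32$, since $\delta(32)=12\ge 11=\frac{32+12}4$, whereas the printed list in condition (b) of the proposition omits $32$; this is a typo in the statement (the paper's own proof verifies $32$ explicitly and carries it through the argument), so your set $S$ does not literally coincide with the set excluded by (a) and (b) as printed, but it is the mathematically correct one.
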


\begin{proof} We have $\d(4)=4=\frac{4+12}4$, $\d(6)=5=\frac{20}4>\frac{6+12}4$, $\d(8)=6=\frac{24}4>\frac{8+12}4$, $\d(16)=8=\frac{32}4>\frac{16+12}4$, $\d(18)=9=\frac{36}4>\frac{18+12}4$, $\d(24)=10=\frac{40}4>\frac{24+12}4$, $\d(25)=10=\frac{40}4>\frac{25+12}4$, $\d(27)=12=\frac{48}4>\frac{27+12}4$, $\d(30)=11=\frac{44}4>\frac{30+12}4$, $\d(32)=12=\frac{48}4>\frac{32+12}4$, $\d(35)=12=\frac{48}4>\frac{35+12}4$, $\d(36)=12=\frac{48}4>\frac{36+12}4$, $\d(40)=13=\frac{52}4>\frac{4)+12}4$. Furthermore, if $p\ge3$ is a prime then $4\d(p)=4p+4=p+3p+4\ge p+13>p+12$, $4\d(2p)=4p+8=2p+2p+8\ge2p+14>2p+12$, $4\d(3p)=4p+12=3p+p+12\ge3p+15>3p+12$ and $4\d(4p)=4p+16>4p+12$.

Now, conversely, let $n\ne$ 4, 6, 8, 16, 18, 24, 25, 27, 30, 32, 35, 36, 40, $p, 2p, 3p, 4p$, $p\ge3$ being a prime. Our aim is to show that $\d(n)<\frac{n+12}4$. Proceeding by contradiction, assume that $\frac{n+12}4\le\d(n)$. Then $\frac n4<\d(n)$ and, in view of 3.10, we get $n=$ 1, 2, 42, 45, 48, 49, 50, 54, 55, 56, 60, 63, 64, 65, 66, 75, 85, 95. Albeit, $\d(1)=2<\frac{13}4$, $\d(2)=3<\frac{14}4$, $\d(42)=13<\frac{54}4$, $\d(45)=14<\frac{57}4$, $\d(48)=14<\frac{60}4$, $\d(49)=14<\frac{61}4$, $\d(50)=15<\frac{62}4$, $\d(54)=15<\frac{66}4$, $\d(55)=16<\frac{67}4$, $\d(56)=15<\frac{68}4$, $\d(60)=16<\frac{72}4$, $\d(63)=16<\frac{75}4$, $\d(65)=18<\frac{77}4$, $\d(66)=17<\frac{78}4$, $\d(75)=20<\frac{87}4$, $\d(85)=22<\frac{97}4$, $\d(95)=24<\frac{107}4$.\end{proof}

\begin{lemma}
Let $n\in\N$. Then:\newline
{\rm(i)} $\d(n)\le\frac{n+4}2$ if and only if $n$ is not an odd prime number.\newline
{\rm(ii)} $\d(n)=\frac{n+4}2$ if and only if $n=2,4,6,8,10,14,2p$, $p\ge11$ being a prime.\newline
{\rm(iii)} $\d(n)\le\frac{n+3}2$ if and only if $n\ne8,p,2p$, $p$ being a prime.\newline
{\rm(iv)} $\d(n)=\frac{n+3}2$ if and only if $n=1,9$.\newline
{\rm(b)} $\d(n)\le\frac{n+2}2$ if and only if $n\ne1,8,9,p,2p$, $p$ being a prime.\newline
{\rm(vi)} $\d(n)=\frac{n+2}2$ if and only if $n=12$.\newline
{\rm(vii)} $\d(n)\le\frac{n+1}2$ if and only if $n\ne1,8,9,12,p,2p$, $p$ being a prime.\newline
{\rm(viii)} $\d(n)=\frac{n+1}2$ if and only if $n=15$.\newline
{\rm(ix)} $\d(n)\le\frac n2$ if and only if $n\ne1,8,9,12,15,p,2p$, $p$ being a prime.\newline
{\rm(x)} $\d(n)=\frac n2$ if and only if $n=16,18$.\newline
{\rm(xi)} $\d(n)\ne\frac{n-1}2$ if and only if $n\ge20$ and $n\ne p,2p$, $p\ge11$ being a prime.\newline
{\rm(xii)} $\d(n)=\frac{n-1}2$ if and only if $n=21$.\newline
{\rm(xiii)} $\d(n)\le\frac{n-2}2$ if and only if $n\ge20$ and $n\ne21,p,2p$, $p\ge11$ being a prime.\newline
{\rm(xiv)} $\d(n)=\frac{n-2}2$ if and only if $n=20$.\newline
{\rm(xv)} $\d(n)\le\frac{n-3}2$ if and only if $n\ge24$ and $n\ne p,2p$, $p\ge13$ being a prime.\newline
{\rm(xvi)} $\d(n)=\frac{n-3}2$ if and only if $n=27$.\newline
{\rm(xvii)} $\d(n)\le\frac{n-4}2$ if and only if $n\ge24$ and $n\ne27,p,2p$, $p\ge13$ being a prime.\newline
{\rm(xviii)} $\d(n)=\frac{n-4}2$ if and only if $n=24$.
\end{lemma}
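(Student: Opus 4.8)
\emph{Proof idea.} All eighteen parts follow from a single identity. Put $r=\e(n)$ and $s=\frac n{\e(n)}$, so $r,s\in\N$, $r\le s$, $rs=n$, and, by 3.3(i), $\d(n)=r+s$. Then
$$(r-2)(s-2)=rs-2(r+s)+4=n+4-2\,\d(n),$$
i.e.\ $2\,\d(n)=n+4-(r-2)(s-2)$. Since $\d(n)$ is an integer, for every $c\in\mathbb Z$ we have $\d(n)\le\frac{n+c}2\iff 2\,\d(n)\le n+c\iff(r-2)(s-2)\ge 4-c$, and similarly $\d(n)=\frac{n+c}2\iff(r-2)(s-2)=4-c$ (which forces $n+c$ even). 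Write $g(n):=(r-2)(s-2)$. Reading the odd-numbered parts as bounds $\d(n)\le\frac{n+c}2$ and the even-numbered parts as the matching equalities, with $c$ taking the successive values $4,3,2,1,0,-1,-2,-3,-4$ (so that (xi) is $\d(n)\le\frac{n-1}2$ and (xviii) is $\d(n)=\frac{n-4}2$, as their positions require), everything reduces to describing $\{n:g(n)\ge k\}$ and $\{n:g(n)=k\}$ for $k=4-c\in\{0,1,\dots,8\}$.

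The second step is to pin down $g$. Since $r\le s$: if $r=1$, equivalently $n=1$ or $n$ is prime, then $g(n)=2-n$, which is $1$ for $n=1$, $0$ for $n=2$, and negative for every odd prime; if $r=2$ — which, by a short divisor argument, means $n=8$ or $n=2p$ with $p$ prime — then $g(n)=0$; and if $r\ge 3$ then $g(n)\ge(r-2)^2\ge1$. Hence, apart from $1$, $8$, the primes and the numbers $2p$, every $n$ has $\e(n)\ge 3$; if such an $n$ also has $g(n)\le 8$, then $(\e(n)-2)^2\le 8$, so $\e(n)\in\{3,4\}$, and then $n\le 30$ (the case $\e(n)=3$, with $g(n)=\frac n3-2$) or $n\le 24$ (the case $\e(n)=4$, with $g(n)=\frac n2-4$). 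Running through the multiples of $3$ and $4$ up to $30$ — using that $(\e(n),\frac n{\e(n)})=(r,s)$ is equivalent to $rs=n$, $r\le s$ and $n$ having \emph{no} divisor strictly between $r$ and $s$ — one finds
$$g(1)=1,\quad g(9)=1,\quad g(12)=2,\quad g(15)=3,\quad g(16)=g(18)=4,\quad g(21)=5,\quad g(20)=6,\quad g(27)=7,\quad g(24)=8,$$
while $g=0$ at $2$, at $8$ and at every $n=2p$, $g<0$ at every odd prime, and $g(n)\ge 9$ at all remaining $n$.

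The rest is bookkeeping against this table. For an inequality part, $\{n:g(n)\ge k\}$ is $\N$ with the odd primes deleted and with those among $1,2,8,9,12,15,16,18,20,21,24,27$ and the numbers $2p$ satisfying $g<k$ also deleted. For $k=0,1,2,3,4$ this reproduces verbatim the exclusion lists of (i),(iii),(v),(vii),(ix). For $k=5,6,7,8$ the deleted set already contains the whole segment $1,\dots,19$ (when $k=5,6$), resp.\ $1,\dots,23$ (when $k=7,8$), so the answer takes the shape: $n\ge n_0$ subject to $n\ne p,2p$, with $n_0=20,20,24,24$, and with $21$ also barred when $k=6$ and $27$ also barred when $k=8$ — which is exactly (xi),(xiii),(xv),(xvii). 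For an equality part, one solves $(r-2)(s-2)=k$ in integers with $1\le r\le s$, puts $n=rs$, and keeps only the $n$ having no divisor strictly between $r$ and $s$; for instance $k=8$ admits $(r,s)=(3,10)$, rejected because $3<5<10$ and $5\mid 30$, and $(r,s)=(4,6)$, kept, so $n=24$ in (xviii). Carrying this out for $k=0,\dots,8$ gives the single values (and the pair $\{16,18\}$) in (ii),(iv),(vi),(viii),(x),(xii),(xiv),(xvi),(xviii).

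The only step requiring genuine care — and the expected main obstacle — is computing $\e(n)$ exactly for the few $n\le 30$ that are neither primes nor numbers $2p$, i.e.\ applying correctly the criterion that $n$ have no divisor strictly between $r$ and $s$; this is precisely what rejects the factorisation $24=3\cdot 8$ in favour of $\e(24)=4$, and similarly at $18,20,27$. Once the table above is in hand, the passage to the lower-bound form $n\ge n_0$ in the last four inequality parts, and the sporadic exclusions $21$ and $27$, are dictated by it, with nothing further to verify.
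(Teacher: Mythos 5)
Your argument is correct, and it is genuinely different from the paper's. The paper proves the lemma by first dispatching $n=1,p,2p,3p,4p$ and the sporadic values $8,9,12,15,20,21$ by direct inequalities, and then invoking Proposition 3.10 (the classification of $n$ with $\delta(n)<\frac n4$) to reduce the remaining cases to an explicit $26$-element list reaching up to $n=95$, which it checks by successive winnowing. You instead observe the single identity $(\varepsilon(n)-2)\bigl(\tfrac n{\varepsilon(n)}-2\bigr)=n+4-2\delta(n)$, which converts all eighteen statements into the description of the level sets $\{n: g(n)\ge k\}$ and $\{n: g(n)=k\}$, $0\le k\le 8$, of the integer-valued function $g(n)=(\varepsilon(n)-2)(\tfrac n{\varepsilon(n)}-2)$; the bound $g(n)\le 8$ then forces $\varepsilon(n)\le 4$ and hence $n\le 30$, so the finite verification shrinks to the multiples of $3$ and $4$ up to $30$ (all of whose $\varepsilon$-values appear in Example 3.6), and it is independent of Proposition 3.10. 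I checked the key steps: the characterizations $\varepsilon(n)=1\iff n=1$ or $n$ prime, and $\varepsilon(n)=2\iff n=8$ or $n=2p$, are right; the table of $g$-values at $1,9,12,15,16,18,20,21,24,27$ matches the paper's table of $\varepsilon$; the translation of the thresholds $k=5,\dots,8$ into the ``$n\ge n_0$'' form of parts (xi)--(xviii) is correct (including the sporadic exclusions $21$ and $27$ and the shift of the prime bound from $11$ to $13$); and your reading of the evident typo ``$\ne$'' in (xi) as ``$\le$'' is the intended one. What your route buys is uniformity and a much smaller case check; what the paper's route buys is reuse of machinery (3.9, 3.10) it has already built for Section 4. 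Either proof is complete.
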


\begin{proof} First of all, if $p$ is an odd prime then $\frac{p+4}2<p+1=\d(p)$. Furthermore, $\d(2)=3$, $\frac{2+3}2<3=\frac{2+4}2$, $\d(1)=2$, $\frac{1+2}2<2=\frac{1+3}2$, $\frac{2p+3}2<p+2=\d(2p)=\frac{2p+4}2$. Henceforth, we can assume that $n\ne1,p,2p$, $p$ being any prime. Then, of course, $n\ge8$. If $p\ge11$ then $\d(3p)=p+3<\frac{3p-4}2$, and if $p\ge7$ then $\d(4p)=p+4<\frac{4p-4}2$. Still further, $\frac{9+2}2<6=\d(9)=\frac{9+3}2$, $\frac{15}2<8=\d(15)=\frac{15+1}2$, $\frac{21-2}2<10=\d(21)=\frac{21-1}2$, $\frac{8+3}2<6=\d(8)=\frac{8+4}2$, $\frac{12+1}2<7=\d(12)=\frac{12+2}2$, $\frac{20-3}2<9=\d(20)=\frac{20-2}2$.

Now, we are fully eligible to assume that $n\ne1,p,2p,3p,4p$. Then $n\ge16$ and $\frac n4<\frac{n-4}2$. Taking into account 3.10, it suffices to consider the numbers $n=$ 16, 18, 24, 25, 27, 30, 32, 35, 36, 40, 42, 45, 48, 49, 50, 54, 55, 56, 60, 63, 64, 65, 66, 75, 85, 95. These numbers possess the following values of the function $\d$: 8, 9, 10, 10, 12, 11, 12, 12, 12, 13, 13, 14, 14, 14, 15, 15, 16, 15, 16, 16, 16, 18, 17, 20, 22, 24. The maximal value is 24 henceforth. But, $24\le\frac{n-4}2$ for $n>50$ (from our list). Still remain the numbers, $n=$ 16, 18, 24, 25, 27, 30, 32, 35, 36, 40, 42, 45, 48, 49, 50. For these numbers, the maximal value of $\d$ is 15. But, $15<\frac{n-4}2$ for $n>34$ (from our list again). Actually remaining numbers are $n=$ 16, 18, 24, 25, 27, 30, 32. Here, the maximal value of $\d$ is 12. But, $12<\frac{n-4}2$ for $n=30,32$. Finally, the numbers $n=$ 16, 18, 24, 25, 27 are still in question. We have $\d(27)=12=\frac{27-3}2$, $\d(25)=1)<\frac{25-4}2$, $\d(24)=10=\frac{24-4}2$, $\d)18)=9=\frac{18}2$, $\d(16)=8=\frac{16}2$. Done!\end{proof}

\section{A few consequences}

\begin{prop} Let $n\in\N$. Then $\d(m)<\d(n)$ for every $m\in\N$, $m<n$, if and only if either $n=1$ or $n$ is a prime number.\end{prop}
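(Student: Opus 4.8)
The plan is to read the displayed condition as the assertion that $n$ is a left-to-right maximum of the sequence $(\d(m))_{m\ge1}$, and to establish the two implications separately.

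For the easy direction, suppose $n=1$ or $n$ is a prime. If $n=1$ there is nothing to prove, since no $m\in\N$ satisfies $m<1$. If $n=p$ is a prime, then $\d(p)=p+1$ by 3.5(ii), whereas for every $m<p$ Proposition 3.5(i) gives $\d(m)\le m+1\le p<p+1=\d(p)$; hence $\d(m)<\d(n)$ for all $m<n$. (The same computation shows, more generally, that the largest prime not exceeding $n$ is always a left-to-right maximum of $\d$, which is exactly why the answer is ``$1$ and the primes''.)

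For the converse I would argue by contraposition: let $n$ be composite, so $n\ge4$, and show that $n$ is not a left-to-right maximum. The one auxiliary estimate needed is that $\d(n)\le\frac n2+2$ whenever $n$ is composite. If $n$ is even this is immediate, since $(2,\frac n2)\in\B(n)$ forces $\d(n)\le2+\frac n2$. If $n$ is odd and composite (so $n\ge9$), let $q$ be its least prime divisor; the complementary factor $\frac nq$ is a product of primes each at least $q$, so $q\le\frac nq$, i.e.\ $q^2\le n$, hence $(q,\frac nq)\in\B(n)$ and $\d(n)\le q+\frac nq$. Since $q\ge3$ and $3q\le3\sqrt n\le n$, the elementary inequality $(q-3)(3q-n)\le0$ rearranges to $q+\frac nq\le3+\frac n3\le\frac n2+2$, which yields the estimate.

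Now let $P$ be the largest prime with $P<n$; it exists because $n\ge4$. If $n$ were a left-to-right maximum, then $\d(P)<\d(n)$, that is $P+1<\d(n)\le\frac n2+2$, so $P\le\frac n2$. On the other hand Bertrand's postulate---which, as is well known, has a wholly elementary proof---supplies a prime $q$ with $\frac n2<q<n$ for every $n\ge4$, contradicting the maximality of $P$. Hence $n$ is not a left-to-right maximum, and the proof is complete. The one non-routine ingredient is Bertrand's postulate, and I expect it to be genuinely unavoidable in this approach: for $n=2p$ with $p$ prime one has $\d(n)=p+2$, so any witness $m<n$ with $\d(m)\ge\d(n)$ must involve a prime strictly between $p$ and $2p$---precisely what Bertrand's postulate asserts. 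Everything else reduces to the elementary estimate above, together with~3.5.
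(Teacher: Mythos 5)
Your argument is correct in outline and is in fact more detailed than the paper's own proof, which is a one-line citation (``follows immediately from 3.4'', evidently meaning Proposition~3.5, i.e.\ $\delta(n)\le n+1$ with equality exactly for $n=1$ and the primes). That citation settles the ``if'' direction just as you do, but it says nothing about why a composite $n$ cannot be a strict left-to-right maximum of $\delta$. Your explicit treatment of that half --- the estimate $\delta(n)\le\frac{n}{2}+2$ for composite $n$ (which is essentially the paper's Lemma~4.2(i)) combined with Bertrand's postulate --- is precisely the mechanism the paper deploys one proposition later, in the proof of~5.2; so you have supplied the argument the authors left implicit here.

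One step needs repair. From $P+1<\delta(n)\le\frac{n}{2}+2$ you conclude $P\le\frac{n}{2}$, but for odd $n$ the inequality $P<\frac{n}{2}+1$ only forces $P\le\frac{n+1}{2}$, and the prime $q$ with $\frac{n}{2}<q<n$ furnished by Bertrand may then coincide with $P=\frac{n+1}{2}$, so no contradiction results. The fix is already in your hands: for odd composite $n$ you proved the sharper bound $\delta(n)\le 3+\frac{n}{3}$, whence $P<2+\frac{n}{3}$; together with $P>\frac{n}{2}$ this forces $n<12$, leaving only $n=9$, where $\delta(7)=8>6=\delta(9)$. (Alternatively, apply Bertrand once more to $\frac{n+1}{2}$ itself to produce a prime strictly between $\frac{n+1}{2}$ and $n$.) With that one line added, your proof is complete.
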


\begin{proof} The assertion follows immediately from 3.4.\end{proof}

\begin{prop} Let $n\in\N$. Then $\d(m)<\d(n)$ for every composite number $m\in\N$, $m<n$, if and only if either $n=p,2p$, $p$ being a prime, or $n=1,8,21$.\end{prop}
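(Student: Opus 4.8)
The plan is to prove the equivalence in both directions. The ``if'' part is quick: for $n=1$ there is no composite below $n$; for a prime $n$ one has $\d(m)\le m+1\le n<n+1=\d(n)$ for every $m<n$ by 3.5; for $n=2p$ with $p$ an odd prime, $\d(2p)=p+2$ (because $\e(2p)=2$), while a composite $m<2p$ is not an odd prime and so $\d(m)\le\frac{m+4}2\le\frac{2p+3}2$ by 4.2(i), i.e.\ $\d(m)\le p+1<\d(2p)$ (the case $n=4$ being vacuous); and $n=8,21$ are dealt with directly against the $\d$-values of the composites lying below them, read off from the table in 3.6.

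For the converse I would argue by contraposition. Assume $n$ is composite, $n\ne2p$ for every prime $p$, and $n\notin\{8,21\}$; the goal is a composite $m<n$ with $\d(m)\ge\d(n)$. For $n\le111$ this is a finite check from the table in 3.6: in essentially every case $m=2q$, with $q$ the largest prime satisfying $2q<n$, does the job (a few small values, such as $m=8$ for $n=9$, being handled separately), and the salient point is that $n=21$ is precisely the one value in this range for which no composite $m<n$ reaches $\d(m)\ge\d(21)=10$, which is why $21$ must be listed as an exception.

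Next let $n>111$, put $e=\e(n)$, $f=n/e$, so that $\d(n)=e+f$; since the function $t\mapsto t+n/t$ decreases on $(0,\sqrt n]$ and $\e(n)\le\sqrt n$, also $\d(n)\le e+n/e$. A brief inspection shows $\e(n)=2$ forces $n$ to be a power of $2$ or twice a prime, hence $e\ge3$ here. If $e\ge5$, then $\d(n)\le5+n/5$, and choosing a prime $q$ with $n/4<q<n/2$ (Bertrand's postulate) and $m=2q<n$ yields a composite $m$ with $\d(m)=q+2>n/4+2\ge n/5+5\ge\d(n)$, the middle inequality holding since $n>60$. If $e=4$, then $n>111$ forces $n=4p$ with $p$ an odd prime (and $p\ge29$); Bertrand's postulate gives a prime $q\in(p,2p)$, necessarily $q\ge p+2$ since $p+1$ is even, and $m=2q<4p$ is composite with $\d(m)=q+2\ge p+4=\d(n)$. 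If $e=3$, then likewise $n=3p$ with $p$ a prime $\ge41$ and $\d(n)=p+3$, and I would take $m=2q$ with $q$ a prime satisfying $p<q<\frac32p$, so that $m<3p$ and $\d(m)=q+2\ge p+3=\d(n)$.

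The main obstacle is exactly this last subcase: it requires a prime strictly between $p$ and $\frac32p$, which is stronger than Bertrand's postulate, and whose failure at $p=7$ is precisely the origin of the exceptional value $n=21$. For the remaining primes $p\ge41$ such a prime is supplied by a sharper, still elementary, estimate of Chebyshev type (for instance the existence of a prime in $(x,\frac65x)$ for every $x\ge25$), and this is the only genuinely arithmetic ingredient of the argument. A secondary chore is to confirm that below $111$ the number $21$ really is the unique sporadic exception (equivalently, that the ``record'' values of $\d$ among composite numbers are exactly $8$, $21$ and the numbers $2p$ with $p$ prime), but this is immediate from the table.
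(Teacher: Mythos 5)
Your argument is correct and follows essentially the same route as the paper's: the ``if'' direction is handled identically via Lemma 4.2(i) and the table, and the converse rests on the same three ingredients the paper uses, namely a finite check against the table of $\delta$-values, Bertrand's postulate for the generic composite $n$ (your bound $\delta(n)\le 5+\frac n5$ playing the role of the paper's $\delta(n)<\frac{n+12}4$), and a stronger prime-gap result (a prime in $(p,\frac32p)$, versus the paper's $(p,\frac43p)$) to dispose of $n=3p$ --- exactly the non-elementary ingredient the paper isolates in Remark 5.3. The only organizational differences are that you split the large-$n$ case by the value of $\varepsilon(n)$ rather than by the shape of $n$, push the finite verification to $111$ instead of $50$, and note that $n=4p$ can be eliminated by Bertrand alone, which slightly weakens the prime-gap hypothesis actually needed.
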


\begin{proof} In view of 5.1, we can assume that the number $n$ is composite as well. If $n=2p$, $p$ being a prime, then $\d(m)\le\frac{m+4}2\le\frac{2p-1+4}2=\frac{2p+3}2=p+\frac32<p+2=\d(2p)$ (use 4.1(i) for $m$). Furthermore, $\d(4)=4<5=\d(6)<6=\d(8)<7=\d(10)=\d(12)<8=\d(15)=\d(16)<9=\d(14)=\d(18)=\d(20)<10=\d(21)$. Consequently, let $n\ne 1,8,21,p,2p$ and let $\d(m)<\d(n)$ for each composite $m$, $m<n$. Having look at the example 3.6, we see readily that $n\ge50$.

Assume, for a moment, that $n=3p,4p$ for a prime $p$. Then $p\ge13$ and we denote by $q$ the smallest prime number greater than $p$. It is generally believed (see 5.3) that $3q<4p$. From that, it follows that $2q<4p-q=3p+(p-q)<3p$, $\d(2q)=q+2\ge p+4>p+3=\d(3p)$ and $\d(3q)=q+3\ge p+5>p+4=\d(4p)$. It means that $n\ne 3p,4p$, a contradiction.

At this stage of our knowledge, we know that $n\ge50$ and $n\ne p,2p,3p,4p$ for every prime $p$. Using 4.1, we get $\d(n)<\frac{n+12}4$. Let $t$ be a prime such that $\frac n2<2t<n$. Then $t+2=\d(2t)<\d(n)<\frac{n+12}4=3+\frac n4<3+t$, so that $t+2<\d(n)<t+3$, where all the three numbers are integers. But this is not possible. Consequently, if $p$ is any prime then either $2p<\frac n2$ or $n\le 2p$. If $n=4k$ ($4k+1,4k+2,4k+3$, resp.) then $k\ge11$ and either $p\le k$ or $2k+1\le p$. This is an apparent contradiction with the illustrious Bertrand Postulate.
\end{proof}

\begin{rem} \rm The following assertion is used in the proof of 5.2: If $p$ is a prime, $p\ge11$, then there is at least one prime $q$ such that $p<q<\frac{4p}3$. (This can be checked easily for $p=11,13,17,19,23,29,31,\dots$ ad libitum.) Concerning this, we can cite \cite{N}. It is shown there that for every $n\ge25$ there is at least one prime $p$ with $n<p<\frac{6n}5$ (of course, $\frac{6n}5<\frac{4n}3$). And there are more results of this type. Unfortunately, all the corresponding proofs are non-elementary.\end{rem}

\begin{prop} Let $n\in\N$. Then $\d(n)<\d(m)$ for every $m\in\N$, $n<m$, if and only if either $n=k^2$ ($\d(n)=2k$) or $n=k^2+k$ ($\d(n)=2k+1$) for some $k\in\N$.\end{prop}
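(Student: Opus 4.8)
The plan is to reduce everything to Proposition 3.4, which already says $\d(m)\ge 2m^{1/2}$ with equality precisely at perfect squares, plus one explicit evaluation of $\d$ at the ``oblong'' numbers. So the first step is to record that if $n=k^2+k$ with $k\in\N$, then $\e(n)=k$: any divisor $d$ of $n$ with $d>k$ has $d\ge k+1$, hence $d^2\ge(k+1)^2>k^2+k=n$, so $k$ is indeed the largest divisor whose square is $\le n$. By 3.3(i) this gives $\d(k^2+k)=k+\frac{k^2+k}{k}=2k+1$, and of course $\d(k^2)=2k$. Throughout I will use that $\d$ is integer-valued (because $\e(n)\mid n$).

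Second, I would dispatch the \emph{if} direction. If $n=k^2$ and $m>n$, then $m\ge k^2+1$ forces $m^{1/2}>k$, so $\d(m)\ge 2m^{1/2}>2k=\d(n)$ by 3.4(i). If $n=k^2+k$ and $m>n$, then $m\ge k^2+k+1$, so $4m\ge 4k^2+4k+4>(2k+1)^2$, whence $2m^{1/2}>2k+1$ and again $\d(m)\ge 2m^{1/2}>2k+1=\d(n)$. In either case $\d(n)<\d(m)$ for all $m>n$, as wanted.

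Third, the converse, argued by contraposition. Suppose $n$ is neither a square nor of the form $k^2+k$, and set $k=\lfloor n^{1/2}\rfloor\ge 1$, so $k^2\le n\le k^2+2k$. Since $n\ne k^2$ and $n\ne k^2+k$, either $k^2<n<k^2+k$ or $k^2+k<n<(k+1)^2$. In the first case take the witness $m=k^2+k>n$: from $n\ge k^2+1$ and 3.4(i) we get $\d(n)\ge 2n^{1/2}>2k$, hence $\d(n)\ge 2k+1=\d(m)$ by integrality. In the second case take $m=(k+1)^2>n$: from $n\ge k^2+k+1$ we get $4n\ge (2k+1)^2+3$, so $\d(n)\ge 2n^{1/2}>2k+1$ and therefore $\d(n)\ge 2k+2=2(k+1)=\d(m)$. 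In both cases there is $m>n$ with $\d(m)\le\d(n)$, so $n$ fails the stated property. (When $k=1$ the first interval is empty, so nothing is lost.)

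Honestly there is no serious obstacle: once Proposition 3.4 and the value $\d(k^2+k)=2k+1$ are in hand, the proof is just choosing the right $m$ --- the next oblong number or the next square past $n$ --- and the inequalities are forced. The only thing to watch is keeping strict and non-strict inequalities straight: in the \emph{if} direction the bound $\d(m)\ge 2m^{1/2}$ already lies strictly above $\d(n)$, while in the converse one needs integrality of $\d$ to upgrade $\d(n)>2k$ (resp.\ $\d(n)>2k+1$) to $\d(n)\ge\d(m)$.
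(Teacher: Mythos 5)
Your proof is correct and takes essentially the same route as the paper's: both directions rest on Proposition 3.4 together with the explicit values $\delta(k^2)=2k$ and $\delta(k^2+k)=2k+1$, and the converse singles out the next square and the next oblong number past $n$ as witnesses. You organize the converse as a contraposition with an explicit two-case split where the paper argues directly from the hypothesis (first forcing $\delta(n)=2k+1$ via $m=(k+1)^2$, then $n=k^2+k$ via $m=k^2+k$), but the content is identical.
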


\begin{proof} If $m>k^2$ ($m>k^2+k$) then, by 3.4, $\d(m)\ge2m^{\frac12}>2k=\d(k^2)$ ($\d(m)\ge2m^{\frac12}>2k+1=\d(k^2+k$). Conversely, let $n$ be such that $n\ne l^2$ for every $l\in\N$ and $\d(n)<\d(m)$ for every $m>n$. Let $k$ be the greatest integer satisfying $k^2\le n$. Then $1\le k\le k^2<n<(k+1)^2=k^2+2k+1$, $\d(n)<\d((k+1)^2)=2k+2$, $\d(n)\le 2k+1$. By 3.4, $2k<2n^{\frac12}\le\d(n)$, and so $2k<\d(n)\le 2k+1$, $\d(n)=2k+1$, $2n^{\frac12}\le2k+1$, $n\le4k^2+4k+1$, $n\le k^2+k$. Since $\d(k^2+k)=2k+1$, we conclude that $n=k^2+k$.\end{proof}

\section{Preparatory results}

For every $k\in\N$, define $I_k=\{k^2-k+1,\dots,k^2\}$ and $J_k=\{k^2+1,\dots,k^2+k\}$. We see readily that $|I_k|=k=|J_k|$, the intervals $I_1,I_2,\dots,J_1,J_2,\dots$ are pair-wise disjoint, $\N=\bigcup_{k\in\N} I_k\cup\bigcup_{k\in\N} J_k$ and $I_1<J_1<I_2<J_2<\dots$ (if $A,B\subseteq\N$ then $A<B$ means that $a<b$ for every $a\in A$, $b\in B$).

\begin{lemma} $\min\,\{\,\d(n)\,|\,n\in I_k\,\}=\d(k^2)=2k$.\end{lemma}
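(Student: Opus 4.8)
The plan is to read off the bound from the universal inequality $\d(n)\ge 2n^{\frac12}$ (Proposition 3.4(i)) combined with the fact that $\d(n)$ is always an integer.

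First I would record that $k^2\in I_k$ and that $\e(k^2)=k$, so by 3.3(i) we have $\d(k^2)=k+\frac{k^2}k=2k$ (this value is also noted in 3.4(ii)). Hence $\min\{\d(n)\mid n\in I_k\}\le 2k$, and it remains only to verify the reverse inequality $\d(n)\ge 2k$ for every $n\in I_k$.

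So fix $n\in I_k$, that is, $k^2-k+1\le n\le k^2$. Then $4n\ge 4k^2-4k+4>4k^2-4k+1=(2k-1)^2$, hence $2n^{\frac12}>2k-1$, and by 3.4(i) we get $\d(n)\ge 2n^{\frac12}>2k-1$. Since $\d(n)=\e(n)+\frac n{\e(n)}$ is a positive integer by 3.3(i), the strict inequality $\d(n)>2k-1$ forces $\d(n)\ge 2k$. Together with the previous paragraph this yields $\min\{\d(n)\mid n\in I_k\}=2k=\d(k^2)$, as desired; the argument is uniform in $k\ge1$ (for $k=1$ it just says $\d(1)=2$).

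There is essentially no obstacle here: the only point requiring a moment's attention is that one must use the integrality of $\d$ to pass from $\d(n)>2k-1$ to $\d(n)\ge 2k$. The purely real lower bound $2n^{\frac12}\ge 2(k^2-k+1)^{\frac12}$ stays strictly below $2k$ whenever $n<k^2$, so the ``rounding up'' step is the load-bearing one; everything else is a single elementary estimate.
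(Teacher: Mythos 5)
Your proof is correct and is essentially the paper's argument: the paper cites Proposition 5.4 (comparing $n>k^2-k=(k-1)^2+(k-1)$ with $\d(k^2-k)=2k-1$), but 5.4 itself rests on the very bound $\d(n)\ge 2n^{\frac12}>2k-1$ plus integrality that you apply directly. The upper bound via $\e(k^2)=k$ and the rounding-up step are exactly as in the paper's chain of lemmas.
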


\begin{proof} We have $I_1=\{1\}$ and $\d(1)=2$. If $k\ge2$ then $2\le(k-1)^2+(k-1)=k^2-k<n$ and $2k-1=\d((k-1)^2+(k-1))<\d(n)$ by 5.4. Thus $2k\le\d(n)$.\end{proof}

\begin{lemma} Let $k\in\N$. The following conditions are equivalent for $n\in\N$ and $r,s\in\N_0$:\begin{enumerate}
\item[\rm(i)] $n\in I_k$, $(r,s)\in\B(n)$ and $r+s\le2k$.
\item[\rm(ii)] $n\in I_k$, $(r,s)\in\B(n)$ and $r+s=2k$.
\item[\rm(iii)] There is $l\in\N_0$ such that $l^2\le k-1$, $n=k^2-l^2$, $r-k-l$, $s-k+l$.\end{enumerate}\end{lemma}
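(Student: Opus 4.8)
The plan is to prove the cycle of implications (iii) $\Rightarrow$ (ii) $\Rightarrow$ (i) $\Rightarrow$ (iii); the first two implications are essentially bookkeeping, while the bulk of the work lies in (i) $\Rightarrow$ (iii).

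First I would dispose of (iii) $\Rightarrow$ (ii). Given $l\in\N_0$ with $l^2\le k-1$, set $r=k-l$, $s=k+l$ and $n=k^2-l^2=rs$. Since $l^2\le k-1<k$ we get $k^2-l^2>k^2-k=(k-1)^2+(k-1)\ge (k-1)^2$, while trivially $k^2-l^2\le k^2$; together with the description of $I_k$ this gives $n\in I_k$. Also $r\le s$ and $r\cdot s=n$, so $(r,s)\in\B(n)$, and $r+s=2k$ by construction. The implication (ii) $\Rightarrow$ (i) is immediate.

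The heart of the argument is (i) $\Rightarrow$ (iii). Assume $n\in I_k$, $(r,s)\in\B(n)$ and $r+s\le 2k$. By Lemma 6.1 we have $\d(n)\ge 2k$, and since $(r,s)\in\B(n)$ gives $r+s\ge\d(n)$, the hypothesis $r+s\le 2k$ forces $r+s=2k=\d(n)$. Now write $l=\frac{s-r}{2}$; since $r+s=2k$ is even, $r$ and $s$ have the same parity, so $l\in\N_0$ (note $r\le s$ gives $l\ge 0$). Then $r=k-l$ and $s=k+l$, whence $n=rs=k^2-l^2$. It remains to check $l^2\le k-1$. From $n\in I_k$ we have $n\ge k^2-k+1$, i.e.\ $k^2-l^2\ge k^2-k+1$, which rearranges to $l^2\le k-1$, exactly as required.

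I do not expect a serious obstacle here; the only point that needs a little care is the parity argument guaranteeing $l\in\N_0$, and this is where Lemma 6.1 does the real work — it is what pins $r+s$ down to the single value $2k$, so that $r+s$ is even and the half-difference $l$ is an integer. (Without Lemma 6.1 one would only know $r+s\le 2k$ with no lower bound, and the parity of $r+s$ would be undetermined.) Everything else is elementary manipulation of the defining inequalities of $I_k$ and the relations $n=rs$, $r\le s$.
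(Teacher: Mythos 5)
Your proof is correct and takes essentially the same route as the paper's: Lemma 6.1 is used to pin $r+s$ down to the single value $2k$, after which the parametrization $r=k-l$, $s=k+l$ and the bound $l^2\le k-1$ fall out of $n=rs$ and the definition of $I_k$. The only cosmetic difference is that you set $l=\frac{s-r}2$ and argue by parity, whereas the paper simply takes $l=k-r$ (using $1\le r\le k$), which is an integer for free.
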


\begin{proof} (i) implies (ii). We have $2k\le\d(n)\le r+s\le2k$ by 6.1.\newline
(ii) implies (i). Trivial.\newline
(ii) implies (iii) Since $1\le r\le s$ and $r+s=2k$, we have $1\le r\le k$ and $r=k-l$ for some $l$, $0\le l\le k-1$. Then $s=2k-r=k+l$. Moreover, $n\in I_k$ and $rs=n$. Thus $1\le k^2-k+1\le n=(k-l)(k+l)=k^2-l^2\le k^2$ and $0\le l^2\le k-1$.\newline
(iii) implies (ii). Clearly, $0\le l\le k-1$, $1\le r\le s$, $rs-n$ $r+s=2k$ and $(r,s)\in\B(n)$. Finally, $k^2-k+1\le k^2-l^2=n\le k^2$ and $n\in I_k$.\end{proof}

\begin{prop} Let $k\in\N$ and $n\in I_k$. Then:\newline
{\rm(i)} $\d(n)\ge2k$.\newline
{\rm(ii)} $\d(n)=2k$ if and only if $n=k^2-l^2$, $l\in\N_0$, $l^2\le k-1$ $(0\le l\le (k-1)^{\frac12})$.\end{prop}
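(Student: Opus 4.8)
The plan is to read off both parts of the statement directly from Lemmas 6.1 and 6.2, which have already isolated all the arithmetic content; the proposition is essentially a repackaging of those two results.

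For part (i), I would simply invoke Lemma 6.1: since $\min\{\,\d(m)\mid m\in I_k\,\}=2k$ and $n\in I_k$, we get $\d(n)\ge2k$ at once.

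For the forward direction of part (ii), suppose $\d(n)=2k$. By definition of $\d$ there is a pair $(r,s)\in\B(n)$ with $r+s=\d(n)=2k$. Then the triple $(n,r,s)$ satisfies condition (ii) of Lemma 6.2, hence also condition (iii): there is $l\in\N_0$ with $l^2\le k-1$, $n=k^2-l^2$, $r=k-l$, $s=k+l$. In particular $n$ has the claimed form. For the converse, suppose $n=k^2-l^2$ with $l\in\N_0$ and $l^2\le k-1$. Putting $r=k-l$ and $s=k+l$, condition (iii) of Lemma 6.2 holds, so condition (ii) holds; in particular $(r,s)\in\B(n)$ with $r+s=2k$, whence $\d(n)\le r+s=2k$. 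Combining this with part (i) gives $\d(n)=2k$.

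There is no genuine obstacle here: the only care needed is to make sure that when $\d(n)=2k$ one actually produces a witnessing pair in $\B(n)$ attaining the value $2k$ (immediate from the definition of $\d$ as a minimum over a finite nonempty set), and, in the converse direction, to remember to feed in part (i) to upgrade the inequality $\d(n)\le2k$ to equality. The parenthetical reformulation $0\le l\le(k-1)^{\frac12}$ is just a restatement of $l^2\le k-1$ for $l\in\N_0$ and needs no separate argument.
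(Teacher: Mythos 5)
Your proof is correct and is exactly the paper's argument: the paper's proof of this proposition reads simply ``Combine 6.1 and 6.2,'' and you have spelled out precisely how that combination works, including the small but necessary step of using part (i) to upgrade $\d(n)\le 2k$ to equality in the converse direction.
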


\begin{proof} Combine 6.1 and 6.2.\end{proof}

\begin{ex} \rm For $k=1$ we get $l=0$ and $n=1$. For $k=2$ we get $l=1,0$ and $n=3,4$. For $k=3$ we get $l=1,0$ and $n=8,9$. For $k=4$ we get $l=1,0$ and $n=15,16$. For $k=5$ we get $l=2,1,0$ and $n=21,24,25$. For $k=6$ we get $l=2,1,0$ and $n=32,35,36$. For $k=7$ we get $l=2,1,0$ and $n=45,48,49$. For $k=8$ we get $l=2,1,0$ and $n=60,63,64$. For $k=9$ we get $l=2,1,0$ and $n=77,80,81$. For $k=10$ we get $l=3,2,1,0$ and $n=91,96,99,100$.\end{ex}

\begin{lemma} $\min\,\{\,\d(n)\,|\,n\in J_k\,\}=\d(k^2+k)=2k+1$.\end{lemma}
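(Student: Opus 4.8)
The plan is to follow exactly the pattern of the proof of Lemma 6.1, with the perfect square $k^2$ now playing the role of the ``record holder'' immediately to the left of $J_k$. First I would pin down the value at the endpoint: $k^2+k=k(k+1)$ lies in $J_k$, and by 3.3(i) we need $\frac{k^2+k}{\e(k^2+k)}$, the smallest positive divisor $m$ of $k(k+1)$ with $k(k+1)\le m^2$. The divisor $m=k+1$ qualifies since $k(k+1)\le(k+1)^2$, whereas $m=k$ fails because $k^2<k(k+1)$, and every divisor $<k$ has square $<k^2<k(k+1)$ a fortiori; hence $\e(k^2+k)=k$ and $\d(k^2+k)=k+(k+1)=2k+1$. (This is in any case already recorded in 5.4.)

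Next I would prove the matching lower bound $\d(n)\ge2k+1$ for every $n\in J_k$. The point is simply that $J_k$ contains no perfect square: indeed $k^2<n\le k^2+k<(k+1)^2$ for $n\in J_k$. Consequently 3.4(i) gives $\d(n)\ge2n^{\frac12}>2(k^2)^{\frac12}=2k$, and since $\d(n)$ is an integer this forces $\d(n)\ge2k+1$. Equivalently one may invoke 5.4 directly: $k^2$ being a perfect square, $\d(k^2)<\d(m)$ for every $m>k^2$, and in particular $2k=\d(k^2)<\d(n)$ for each $n\in J_k$.

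Combining the two steps, $2k+1\le\d(n)$ for all $n\in J_k$, with equality attained at $n=k^2+k$; therefore $\min\{\d(n)\mid n\in J_k\}=\d(k^2+k)=2k+1$, as asserted.

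I do not anticipate any genuine obstacle here. The only subtlety worth stating explicitly is the strictness of $\d(n)>2k$ on $J_k$ (rather than merely $\d(n)\ge 2k$), which rests on the elementary remark that $k^2<n<(k+1)^2$ for $n\in J_k$, so that $n$ is never a square; this is precisely what elevates the bound from $2k$ to $2k+1$ and makes the minimum occur at the extreme point $k^2+k$ rather than somewhere in the interior of $J_k$.
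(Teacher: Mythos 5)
Your proof is correct and follows essentially the same route as the paper: the paper computes $\d(k^2+k)=2k+1$ from $\e(k^2+k)=k$ and gets the lower bound by invoking 5.4 to conclude $2k=\d(k^2)<\d(n)$ for all $n\in J_k$, which is exactly your second variant. Your primary variant (using 3.4(i) to get $\d(n)\ge2n^{\frac12}>2k$ and then integrality) is only a cosmetic rephrasing of the same idea, so there is nothing substantive to add.
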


\begin{proof} We have $k^2<n\le k^2+k$ and it follows from 5.4 that $2k=\d(k^2)<\d(n)$. Henceforth, $\d(k^2k)=2k+1\le\d(n)$.\end{proof}

\begin{lemma} Let $k\in\N$. The following conditions are equivalent for $n\in\N$ and $r,s\in\N_0$:\begin{enumerate}
\item[\rm(i)] $n\in J_k$, $(r,s)\in\B(n)$ and $r+s\le2k+1$.
\item[\rm(ii)] $n\in J_k$, $(r,s)\in\B(n)$ and $r+s=2k+1$.
\item[\rm(iii)] There is $l\in\N_0$ such that $l^2+l\le k-1$, $n=k^2+k-l^2-l$, $r=k-l$, $s=k+l+1$.\end{enumerate}\end{lemma}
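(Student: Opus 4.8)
The plan is to mirror the proof of 6.2 almost line for line, the sole structural difference being that the target divisor sum $2k+1$ is odd, so that the two central divisors of a value in $J_k$ straddle the midpoint and must be written $k-l$ and $k+l+1$ rather than $k-l$ and $k+l$.

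First I would run the cycle of implications. For (i)$\Rightarrow$(ii) I invoke 6.5, which supplies $2k+1=\d(k^2+k)\le\d(n)$ for every $n\in J_k$; since also $\d(n)\le r+s\le 2k+1$ by the definition of $\d$ and by hypothesis (i), all these quantities coincide and $r+s=2k+1$. The implication (ii)$\Rightarrow$(i) is trivial.

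For (ii)$\Rightarrow$(iii): from $1\le r\le s$ and $r+s=2k+1$ one gets $r\le k$, hence $r=k-l$ for a unique $l$ with $0\le l\le k-1$, and then $s=(2k+1)-r=k+l+1$. Now $n=rs=(k-l)(k+l+1)=k^2+k-l^2-l$; the membership $n\in J_k$, i.e.\ $k^2+1\le n\le k^2+k$, holds on the right automatically (as $l^2+l\ge 0$) and on the left rearranges to exactly $l^2+l\le k-1$. Conversely, for (iii)$\Rightarrow$(ii) I start from an $l$ with $l^2+l\le k-1$: then $l\le l^2+l\le k-1$, so $r=k-l\ge 1$, and $s-r=2l+1>0$, so $1\le r\le s$; the identities $rs=k^2+k-l^2-l=n$ and $r+s=2k+1$ give $(r,s)\in\B(n)$; finally $0\le l^2+l\le k-1$ yields $k^2+1\le n=k^2+k-l^2-l\le k^2+k$, that is $n\in J_k$.

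I do not expect any genuine obstacle here: the computation is routine and essentially forced by the $I_k$-analogue 6.2. The only points requiring a little care are the asymmetric parametrization $r=k-l$, $s=k+l+1$ dictated by the parity of $2k+1$, and the recognition that the inequality $l^2+l\le k-1$ is the correct $J_k$-counterpart of the condition $l^2\le k-1$ occurring in 6.2.
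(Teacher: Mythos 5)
Your proof is correct and follows essentially the same route as the paper's: the same appeal to 6.5 for (i)$\Rightarrow$(ii), the same parametrization $r=k-l$, $s=k+l+1$ for (ii)$\Rightarrow$(iii), and the same direct verification for (iii)$\Rightarrow$(ii). No issues.
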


\begin{proof} (i) implies (ii). We have $2k+1\le\d(n)\le r+s\le 2k+1$ by 6.5.\newline
(ii) implies (i). Trivial.\newline
(ii) implies (iii). Since $1\le r\le s$ and $r+s=2k+1$, we have $1\le r\le k$ and $r=k-l$ for some $l$, $0\le l\le k-1$. Then $s=2k+1-r=k+l+1$. Moreover, $n\in J_k$ and $rs=n$. Thus $2\le k^2+1\le n=(k-l)(k+l+1)=k^2-l^2+k-l\le k^2+k$ and $0\le l^2+l\le k-1$.\newline
(iii) implies (ii). Clearly, $0\le l\le k-1$, $1\le r\le s$, $rs=n$, $r+s=2k+1$ and $(r,s)\in\B(n)$. Finally, $k^1+1=(k^2+k)-(k-1)\le k^2+k-(l^2+l)=n\le k^2+k$ and $n\in J_k$.\end{proof}

\begin{prop} Let $k\in\N$ and $n\in J_k$. Then:\newline
{\rm(i)} $\d(n)\le2k+1$.\newline
{\rm(ii)} $\d(n)=2k+1$ if and only if $n=(k^2+k)-(l^2+l)$, $l\in\N_0$, $l^2+l\le k-1$ $\left(0\le l\le\frac{(4k-3)^{\frac12}-1}2\right)$.\end{prop}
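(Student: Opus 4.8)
The plan is to read this proposition as the $J_k$-analogue of 6.3 and to prove it exactly as 6.3 is proved: by combining 6.5, which fixes the extreme value of $\d$ on the block $J_k$, with 6.6, which records precisely which pairs $(r,s)\in\B(n)$ realise the critical sum $2k+1$. Between them these two lemmas should yield both the bound in (i) and the equality characterization in (ii), with no genuinely new computation.

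For part (i) I would invoke 6.5 directly: it states that $2k+1=\d(k^2+k)$ is the minimum of $\d$ on $J_k$, so $\d(n)\ge 2k+1$ for every $n\in J_k$ (the underlying reason being that, by 5.4, $\d(k^2)=2k<\d(n)$ once $k^2<n$, whence integrality forces $\d(n)\ge 2k+1$). I would flag here that the inequality in (i) is thus a lower bound and should read $\d(n)\ge 2k+1$, in parallel with its counterpart 6.3(i) and with the cited 6.5; an upper bound $\d(n)\le 2k+1$ cannot hold on all of $J_k$, since $\d(k^2+1)=k^2+2$ whenever $k^2+1$ is prime (e.g.\ $\d(5)=6>5$ at $k=2$).

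Part (ii) is the substantive half, which I would deduce from the equivalences in 6.6. Granting part (i), for $n\in J_k$ the equality $\d(n)=2k+1$ holds exactly when that minimal sum is actually attained, i.e.\ when some $(r,s)\in\B(n)$ has $r+s\le 2k+1$. By the equivalence of conditions (i) and (iii) in 6.6 this happens precisely when $n=(k^2+k)-(l^2+l)$ with $r=k-l$, $s=k+l+1$ for some $l\in\N_0$ with $l^2+l\le k-1$. The parenthetical range $0\le l\le\frac{(4k-3)^{\frac12}-1}2$ is then simply the nonnegative solution of the quadratic inequality $l^2+l\le k-1$ and needs no separate argument.

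The one delicate point — exactly as in 6.3(ii) — is that this equality characterization leans on the lower bound from 6.5: because $\d(n)\ge 2k+1$ throughout $J_k$, any single factorization with $r+s=2k+1$ is automatically minimal and therefore certifies $\d(n)=2k+1$. That is the only conceptual step; solving the quadratic for the range of $l$ and checking that the listed $n$ lie in $J_k$ are routine and are in fact already packaged inside 6.6. So the main (very mild) obstacle is keeping the sign convention in (i) straight, not any real mathematical difficulty.
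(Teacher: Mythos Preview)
Your approach is exactly the paper's: it proves 6.7 by the one-line ``Combine 6.5 and 6.6,'' and you have unpacked precisely how those two lemmas yield the bound and the equality characterization. You are also right that the inequality in (i) is a typo and should read $\d(n)\ge 2k+1$, in parallel with 6.3(i) and as forced by 6.5; your counterexample $\d(5)=6>5$ at $k=2$ confirms this.
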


\begin{proof} Combine 6.5 and 6.6.\end{proof}

\begin{ex} \rm For $k=1$ we get $l=0$ and $n=2$. For $k=2$ we get $l=0$ and $n=6$. For $k=3$ we get $l=1,0$ and $n=10,12$. For $k=4$ we get $l=1,0$ and $n=18,20$. For $k=5$ we get $l=1,0$ and $n=28,30$. For $k=6$ we get $l=1,0$ and $n=40,42$. For $k=7$ we get $l=2,1,0$ and $n=50,54,56$. For $k=8$ we get $l=2,1,0$ and $n=60,70,72$. For $k=9$ we get $l=2,1,0$ and $n=84,88,90$. For $k=10$ we get $l=2,1,0$ and $n=104,108,110$.\end{ex}

\section{Concluding results} 

\begin{prop} Let $k,n\in\N$ be such that $k^2-k+1\le n\le k^2$ (i.e., $n\in I_k$) and let $t,r_1,\dots,r_t,s_1,\dots,s_t\in\N$ be such that $r_1s_1+\dots+r_ts_t=n$ and $r_1>\dots>r_t$ (for $t\ge2$). Then $r_1+s_1+\dots+s_t\ge2k$.\end{prop}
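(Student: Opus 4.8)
The plan is to bound $r_1+s_1+\dots+s_t$ from below by collapsing the whole partitioning onto its largest row parameter $r_1$. First, one notes that $r_i\le r_1$ for every $i$: this is immediate from $r_1>\dots>r_t$ when $t\ge2$, and it is trivial when $t=1$. Hence
\[
n=r_1s_1+\dots+r_ts_t\le r_1(s_1+\dots+s_t),
\]
so that $s_1+\dots+s_t\ge n/r_1$; since the left-hand side is a positive integer, this inequality may be used freely.

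Then I would argue by contradiction. Suppose $r_1+s_1+\dots+s_t\le2k-1$. Then $s_1+\dots+s_t\le2k-1-r_1$, and since the left-hand side is at least $t\ge1$ we obtain $r_1\le2k-2$, in particular $2k-1-r_1\ge1$. Combining with the displayed inequality,
\[
n\le r_1(s_1+\dots+s_t)\le r_1(2k-1-r_1).
\]

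The crux is the elementary estimate $r_1(2k-1-r_1)\le k^2-k$, valid for every integer $r_1$: it is equivalent to $0\le(r_1-k+1)(r_1-k)$, which holds because the product of two consecutive integers is non-negative. This forces $n\le k^2-k$, contradicting $n\ge k^2-k+1$, and hence $r_1+s_1+\dots+s_t\ge2k$, as asserted.

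I do not expect a genuine obstacle here; the only points that need care are to route the bound through $r_1$ alone (discarding the finer structure of the $s_i$) and to use the integrality of $r_1$ in the quadratic estimate, since the real-variable maximum $(2k-1)^2/4=k^2-k+\frac14$ falls just short of what is needed. One could instead package the same arithmetic via Lemma 3.1 applied to $r_1+\frac n{r_1}$ together with a rounding step, but the contradiction form above is the most economical; note also that only the inequality $n\ge k^2-k+1$, and not $n\le k^2$, from $n\in I_k$ is actually used.
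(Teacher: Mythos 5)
Your proof is correct, and it finishes differently from the paper's. Both arguments begin with the same collapsing step $n=\sum r_is_i\le r_1(s_1+\dots+s_t)$, but the paper then splits into cases: for $t\ge2$ it uses the strict inequality $w>r_1+\frac n{r_1}\ge2n^{\frac12}>2k-1$ (Lemma 3.1 plus $n>k^2-k+\frac14$) and rounds up, while for $t=1$, where equality $w=r_1+\frac n{r_1}$ can occur, it falls back on $w\ge\delta(n)\ge2k$ via Lemma 6.1 and the earlier theory of the function $\delta$. Your contradiction argument avoids the case split entirely: assuming $w\le2k-1$ gives $n\le r_1(2k-1-r_1)=k^2-k-(r_1-k)(r_1-k+1)\le k^2-k$, since a product of consecutive integers is non-negative, contradicting $n\ge k^2-k+1$. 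This is more self-contained (it does not invoke Section 6 at all) and handles $t=1$ and $t\ge2$ uniformly, because it never needs strictness --- integrality does the work. Two small remarks: the intermediate deduction $r_1\le2k-2$ is not actually needed, and even the integrality of $r_1$ is dispensable, since the real maximum $k^2-k+\frac14$ already forces the integer $n$ to satisfy $n\le k^2-k$; your observation that only the lower bound $n\ge k^2-k+1$ from $n\in I_k$ is used is accurate and matches the paper.
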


\begin{proof} Firstly, $r_1(s_1+\dots+s_t)\ge r_1s_1+\dots+s_t=n$ and the equality takes place only for $t=1$. In other words, $w=r_1+s_1+\dots+s_t\ge r+1+\frac n{r_1}$ and $w=r_1+\frac n{r_1}$ just for $t=1$. Now, by 3.1, $r_1+\frac n{r_1}\ge2n^{\frac12}$ and $r_1+\frac n{r_1}=2n^{\frac12}$ just for $n=r_1^2$.

Let $t=1$. Then $w=r_1+s_1=r_1+\frac n{r_1}$ and $\d(n)\le w$. By 6.1, $2k\le\d(n)$. Thus $2k\le w$.

Let $t\ge2$. Then $w>r_1+\frac n{r_1}\le2n^{\frac12}$, $(k-\frac12)^2-k^2-k+\frac14<k^2-k+1\le n\le k^2$, and therefore $k-\frac12<n^{\frac12}\le k$ and $w>2n^{\frac12}>2k-1$. Consequently, $w\ge2k$.\end{proof}

\begin{ex} \rm Let $k\in\N$, $k\ge2$, and let $n\in I_k\setminus\{k^2\}$ (i.e., $k^2-k+1\le n\le k^2-1$). Put $l=n-k^2+k$. Then $1\le l\le k-1$, $n=k^2-k+l$ and $n=k(k-1)+l\cdot1=r_1s_1+r_2s_2$, where $r_1=k$, $r_2=l$, $r_1>r_2$, $s_1=k-1$, $s_2=1$. Of course, $w=r_1+s_1+s_2=k+k-1+1=2k$.\end{ex}

\begin{ex} \rm Let $j,k\in\N$, $j\ge2$, $k\ge\frac{j^2+j+2}2$ ($\ge4$) and let $n\in I_k\setminus\{k^2,k^2-1,\dots,k^2-\frac{j^2+j-2}2\}$ (i.e., $k^2-k+1\le n\le k^2-\frac{j^2+j}2$). Clearly, $3\le\frac{j^2+j}2\le k-1$, $13\le k^2-k+1\le k^2-\frac{j^2+j}2\le k^2-3$ and $2\le \frac{j^2-j+2}2$. Put $l=n-k^2+k+\frac{(j-1)j}2$. Then $n=k^2-k-\frac{(j-1)j}2+l$ and $2\le\frac{j^2-j+2}2=k^2-k+1-k^2+k+\frac{j^2-j}2\le l\le k^2-\frac{j^2+j}2-k^2+k+\frac{(j-1)j}2=k-j\le k-2$. Now, $n=k(k-j)+(k-1)\cdot1+\dots+(k-j+1)\cdot1+l\cdot1=r_1s_1+r_2s_2+\dots+r_js_j+r_{j+1}s_{j+1}$, where $3\le j+1$, $r_1=k$, $r_2=k-1,\dots,r_j=k-j+1$, $r_{j+1}=l$, $r_1>r_2>\dots>r_{j+1}$, $s_1=k-j$, $s_2=\dots=s_{j+1}=1$. Of course, $w=r_1+s_1+\dots+s_{j+1}=k+k-j+j=2k$.\end{ex}

\begin{ex} \rm (i) $32=6^2-6+2=6\cdot5+2\cdot1=6\cdot4+4\cdot2=5\cdot5+4\cdot1+3\cdot1$, $2\cdot6=12=6+5+1=6+4+2=5+5+1+1$.\newline
(ii) $43-7^2-7+1=7\cdot4+6\cdot1+5\cdot1+4\cdot1$, $2\cdot7=14=7+4+1+1+1$.\newline
(iii) $57=8^2-8+1=8\cdot5+7\cdot2+3\cdot1$, $2\cdot8=18=8+5+2+1$.\newline
(iv) $111=11^2-11+1=11\cdot7+10\cdot1+9\cdot1+8\cdot1+7\cdot1$, $2\cdot11=22=11+7+1+1+1+1$.\end{ex}

\begin{rem} \rm(cf. 7.1) Let $k\in\N$ and let $k^2=r_1s_1+\dots r_ts_t$, $t,r_i,s_i\in\N$, $r_1>\dots>r_t$ (for $t\ge2$). Put $w=r_1+s_1+\dots+s_t$. If $t\ge2$ then $w\ge2k+1$ (see the proof of 7.1). If $t=1$ and $w=2k$ then $r_1=k=s_1$.\end{rem}

\begin{ex} \rm (i) $2^2=4=2\cdot1+1\cdot2$, $2>1$, $2+1+2=5=2\cdot2+1$.\newline
(ii) Let $k\ge3$. Then $k^2=(k+2)\cdot1+(k+1)\cdot(k-2)$, $k+2>k+1$, $k_2+1+k-2=2k+1$.\end{ex}

\begin{prop} Let $k,n\in\N$ be such that $k^2+1\le n\le k^2+k$ (i.e., $n\in J_k$) and let $t,r_1,\dots,r_t,s_1,\dots,s_t$ be such that $r_1s_1+\dots+r_ts_t=n$ and $r_1>\dots>r_t$ (for $t\ge2$). Then $r_1+s_1+\dots+s_t\ge2k+1$.\end{prop}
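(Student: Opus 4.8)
The plan is to mirror the proof of Proposition 7.1, which handled the analogous statement for $n\in I_k$, and adapt each step to the interval $J_k$. Write $w=r_1+s_1+\dots+s_t$. The first observation is the same: since $r_1>\dots>r_t$, we have $r_1(s_1+\dots+s_t)\ge r_1s_1+\dots+r_ts_t=n$, so $w\ge r_1+\frac n{r_1}$, with equality forcing $t=1$. By Lemma 3.1(i), $r_1+\frac n{r_1}\ge 2n^{1/2}$, with equality only when $n=r_1^2$.

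Next I would split into the cases $t=1$ and $t\ge2$. For $t=1$ we have $w=r_1+s_1=r_1+\frac n{r_1}$ with $r_1\mid n$, so $\d(n)\le w$; by Lemma 6.5 (applied to $n\in J_k$), $2k+1=\d(k^2+k)\le\d(n)\le w$, giving $w\ge 2k+1$ immediately. For $t\ge2$ we only have the inequality $w>r_1+\frac n{r_1}\ge 2n^{1/2}$. Here $k^2+1\le n$ gives $n^{1/2}\ge(k^2+1)^{1/2}>k$, hence $w>2n^{1/2}>2k$, and since $w$ is an integer this yields $w\ge 2k+1$ as well. Combining the two cases finishes the proof.

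The main subtlety — and the one point where the $J_k$ case is genuinely cleaner than the $I_k$ case — is the lower bound on $n^{1/2}$ in the case $t\ge2$. In Proposition 7.1 the authors had to work with $k-\tfrac12<n^{1/2}\le k$ to conclude $w>2k-1$, i.e. $w\ge 2k$; here, because $n\ge k^2+1$ strictly exceeds $k^2$, we get the strict inequality $n^{1/2}>k$ directly, so $2n^{1/2}>2k$ and the integrality of $w$ immediately delivers $w\ge 2k+1$. So I expect the only thing to be careful about is invoking the correct earlier result — Lemma 6.5 rather than Lemma 6.1 — in the $t=1$ branch, and making sure the strict inequality $n>k^2$ is used (not just $n\ge k^2$) in the $t\ge2$ branch; beyond that the argument is entirely routine.
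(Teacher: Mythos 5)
Your proof is correct and follows essentially the same route as the paper: the paper's own argument for this proposition is exactly the abbreviation ``proceed as in 7.1; if $t\ge2$ then $w>2n^{1/2}>2k$, hence $w\ge2k+1$,'' with the $t=1$ case handled via the minimum of $\d$ on $J_k$ (Lemma 6.5, packaged in the paper as 6.7). Your explicit remark that $n\ge k^2+1$ makes the $t\ge2$ case cleaner than in 7.1 is accurate and matches the paper's reasoning.
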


\begin{proof} Using 6.7, we proceed similarly as in the proof of 7.1. If $t\ge2$ then $w>2n^{\frac12}>2k$, and hence $w\ge2k+1$.\end{proof}

\begin{ex} \rm Let $k\in\N$, $k\ge2$, and let $n\in J_k\setminus\{k^2+k\}$ (i.e., $k^2+1\le n\le k^2+k-1$). Put $l=n-k^2$. Then $1\le l\le k-1$, $n=k^2+l$ and $n=k\cdot k+l\cdot1=r_1s_1+r_2s_2$, where $r_1=k$, $r_2=l$, $r_1>r_2$, $s_1=k$, $s_2=1$. Of course, $w=r_1+s_1+s_2=k+k+1=2k+1$.\end{ex}

\begin{ex} \rm Let $j,k\in\N$, $k\ge\frac{j^2+3j+4}2$ ($\ge4$) and let $n\in J_k\setminus\{k^2+k,k^2+k-1\dots,k^2+k-\frac{j^2+3j}2\}$ (i.e., $k^2+1\le n\le k^2+k-\frac{j^2+3j+2}2$). Clearly, $3\le\frac{j^2+3j+2}2\le k-1$, $2\le k^2+1\le k^2+k-\frac{j^2+3j+2}2\le k^2+k-3$ and $2\le\frac{j^2+j+2}2$. Put $l=n-k^2+\frac{j(j+1)}2$. Then $n=k^2-\frac{j(j+1)}2+l$ and $2\le\frac{j^2+j+2}2=k^2+1-k^2+\frac{j(j+1)}2\le l\le k^2+k-\frac{j^2+3j+2}2-k^2+\frac{j(j+1)}2=k-j-1\le k-2$. Now, $n=k\cdot(k-j)+(k-1)\cdot1+\dots+(k-j)\cdot1+l\cdot1=r_1s_1+r_2s_2+\dots+r_{j+1}s_{j+1}+r_{j+2}s_{j+2}$, where $3\le j+2$, $r_1=k$, $r_2=k-1,\dots,r_{j+1}=k-j$, $r_{j+2}=l$, $r_1>r_2>\dots>r_{j+2}$, $s_1=k-j$, $s_2=\dots=s_{j+2}=1$. Of course, $w=r_1+s_1+\dots+s_{j+2}=k+k-j+j+1=2k+1$.\end{ex}

\begin{ex} \rm (i) $37=6^2+1=6\cdot5+4\cdot1+3\cdot1=6\cdot4+5\cdot2+3\cdot1$, $2\cdot6+1=13=6+5+1+1=6+4+2+1$.\newline
(ii) $122=11^2+11=11\cdot8+10\cdot1+9\cdot1+8\cdot1+7\cdot1$, $2\cdot11+1=23=11+8+1+1+1+1$.\end{ex}

\begin{rem} \rm(cf. 7.7) Let $k\in\N$ and let $k^2+k=r_1s_1+\dots+r_ts_t$, $t,r_i,s_i\in\N$, $r_1>\dots>r_t$ (for $t\ge2$). Put $w=r_1+s_1+\dots+s_t$. If $t=1$ and $w=2k+1$ then $(r_1,s_1=(k,k+1),(k+1,k)$ (see 6.6). Now, assume that $t\ge2$. We wish to show that then $w\ge2k+2$.

Assume, on the contrary, that $w=2k+1$. We have $k^2+k<r_1(s_1+\dots+s_t)$, where $s_1+\dots+s_t=2k+1-r_1$. Thus $k^2+k<r_1(2k+1-r_1)=2kr_1+r_1-r_1^2$, $(r_1-k)^2+2kr_1=r_1^2+k^2<2kr_1+r_1-k$, $0\le(r_1-k)^2<r_1-k$, $1\le r_1-k<1$, a contradiction.\end{rem}

\begin{ex} \rm Let $k\ge2$. Then $k^2+k=(k+2)(k-1)+2\cdot1$, $k+2>2$, $k+2+k-1+1=2k+2$.\end{ex}

\section {Summary}

\begin{theorem} Let $n$ be a positive integer and let $\varphi(n)$ denote the minimum of all the sums $r_1+s_1+\dots+s_t$, where $t,r_i,s_i$ are positive integers such that $n=\sum_{i=1}^t r_is_i$ and $r_1>\dots>r_t$ (for $t\ge2$). Then:\newline
{\rm(i)} There is a uniquely determined positive integer $k$ such that $k^2-k+1\le n\le k^2+k$.\newline
{\rm(ii)} If $n\le k^2$ then $\varphi(n)=2k$.\newline
{\rm(iii)} If $k^2+1\le n$ then $\varphi(n)=2k+1$.\end{theorem}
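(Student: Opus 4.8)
The plan is simply to assemble results already in hand: the lower bounds of Propositions~7.1 and~7.7 and the explicit admissible decompositions exhibited in the examples of Section~7. No new idea is required; the argument is pure bookkeeping, so I do not expect a genuine obstacle --- the only points demanding any care are the degenerate value $k=1$ and the check that the exhibited decompositions really obey the strict chain $r_1>\dots>r_t$.

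For~(i): the blocks $I_k\cup J_k=\{k^2-k+1,\dots,k^2+k\}$, $k\in\N$, are $\{1,2\}$, $\{3,\dots,6\}$, $\{7,\dots,12\},\dots$; the right endpoint of the $k$-th block is $k^2+k$ while the left endpoint of the $(k+1)$-st is $(k+1)^2-(k+1)+1=k^2+k+1$, so consecutive blocks abut without overlap and their union is $\N$ (equivalently, invoke $\N=\bigcup_k I_k\cup\bigcup_k J_k$ from Section~6). Hence the integer $k$ with $k^2-k+1\le n\le k^2+k$ exists and is unique. Note also that $\varphi(n)$ is well defined, since $n=n\cdot1$ is an admissible one-term decomposition.

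For~(ii): assume $n\le k^2$; by~(i) this puts $n\in I_k$, so Proposition~7.1 gives $\varphi(n)\ge2k$. For the reverse inequality I display one admissible decomposition attaining $2k$: if $n=k^2$, take $t=1$, $r_1=s_1=k$; if $n<k^2$, take the two-term decomposition of Example~7.2, i.e.\ $l=n-k^2+k\in\{1,\dots,k-1\}$ and $n=k(k-1)+l\cdot1$ with $r_1=k>r_2=l$, $s_1=k-1$, $s_2=1$, of weight $k+(k-1)+1=2k$. (When $k=1$ only the first alternative occurs, $n=1=k^2$.) Hence $\varphi(n)=2k$.

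For~(iii): assume $k^2+1\le n$; by~(i) this puts $n\in J_k$, so Proposition~7.7 gives $\varphi(n)\ge2k+1$. For the reverse inequality: if $n=k^2+k$, take $t=1$, $r_1=k$, $s_1=k+1$; if $n<k^2+k$, take the two-term decomposition of Example~7.8, i.e.\ $l=n-k^2\in\{1,\dots,k-1\}$ and $n=k\cdot k+l\cdot1$ with $r_1=k>r_2=l$, $s_1=k$, $s_2=1$, of weight $k+k+1=2k+1$. (When $k=1$ only the first alternative occurs, $n=2=k^2+k$.) Hence $\varphi(n)=2k+1$. All the substantive content of the theorem lies in Sections~6 and~7.
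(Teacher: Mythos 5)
Your proposal is correct and follows exactly the paper's intended route: the lower bounds come from Propositions~7.1 and~7.7, and the matching upper bounds from the decompositions $n=k\cdot k$, $n=k(k-1)+l\cdot 1$ (Example~7.2) and $n=k\cdot(k+1)$, $n=k\cdot k+l\cdot 1$ (Example~7.8), which is precisely what the paper's one-line proof ``Combine 7.1, 7.2, 7.7 and 7.8'' means. Your explicit verification of part~(i) and of the $k=1$ edge cases is a welcome addition but does not change the argument.
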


\begin{proof} Combine 7.1, 7.2, 7.7 and 7.8.\end{proof}

\begin{rem} \rm Let $n,k\in\N$. Then $k^2-k+1\le n\le k^2+k$ if and only if $|n-k^2|\le|n-l^2|$ for every $l\in\N$, $l\ne k$.\end{rem}

\begin{ex} \rm Some values of $\varphi(n)$ are presented below:

\bigskip
$\begin{array}{c | c c c c c c c c c c c c c c c c}
n&1&2&3&4&5&6&7&8&9&10&11&12&13&14&15&16\\
\varphi(n)&1&3&4&4&5&5&6&6&6&7&7&7&8&8&8&8
\end{array}$

\bigskip
$\begin{array}{c | c c c c c c c c c}
n&17&18&19&20&100&1000&1000000&1001000&1111111\\
\varphi(n)&9&9&9&9&20&64&2000&2001&2110
\end{array}$
\end{ex}

}
\end{document}